\numberwithin{equation}{section}
\newtheorem{thm}{Theorem}[section]
\newtheorem{cor}[thm]{Corollary}
\newtheorem{lem}[thm]{Lemma}
\newtheorem{prop}[thm]{Proposition}
\theoremstyle{definition}
\newtheorem{defn}[thm]{Definition}
\newtheorem{rem}[thm]{Remark}
\newcommand{\dC}{{\mathbb{C}}}
\newcommand{\dR}{{\mathbb{R}}}
\newcommand{\cF}{{\mathcal F}}
\newcommand{\cG}{{\mathcal G}}
\newcommand{\cH}{{\mathcal H}}
\renewcommand{\ker}{\operatorname{ker}}
\newcommand{\ran}{\operatorname{ran}}
\newcommand{\dom}{\operatorname{dom}}
\newcommand{\sig}{\operatorname{sgn}}
\begin{document}

\title[QBTs, self-adjoint extensions, and Robin Laplacians]{Quasi boundary triples, self-adjoint extensions, and Robin Laplacians on the half-space}

\author{Jussi Behrndt}
\address{Institut f\"ur Angewandte Mathematik, Technische Universit\"at Graz, Steyrergasse 30, 8010 Graz, Austria}
\email{behrndt@tugraz.at}
\urladdr{http://www.math.tugraz.at/$\sim$behrndt/}

\author{Peter Schlosser}
\address{Institut f\"ur Angewandte Mathematik, Technische Universit\"at Graz, Steyrergasse 30, 8010 Graz, Austria}
\email{schlosser@tugraz.at}
\urladdr{}

\begin{abstract}
In this note self-adjoint extensions of symmetric operators are
investigated by using the abstract technique of quasi boundary triples and
their Weyl functions. The main result is an extension of \cite[Theorem~2.6]{BeLaLoRo2017_1} which provides 
sufficient conditions on the parameter in the boundary space to induce self-adjoint realizations.
As an example self-adjoint Robin Laplacians on the half-space with boundary conditions involving an unbounded coefficient
are considered.
\end{abstract}

%\subjclass[2010]{Primary 34B25, 34L15; Secondary 47E05, 47B50}

%\keywords{Sturm-Liouville equation, indefinite weight, non-real eigenvalue}

\maketitle

\section{Introduction}

The concept of quasi boundary triples and their Weyl functions is a useful tool in the spectral theory of symmetric and self-adjoint 
elliptic partial differential operators. This abstract notion from \cite{BL07,BL12} 
is a slight generalization of ordinary boundary triples and their Weyl functions from \cite{BGP08,DM91,GG91},
adapted and extended in such a way that it directly applies to elliptic boundary value problems in the Hilbert space framework.

Very roughly speaking, 
a quasi boundary triple consists of a boundary Hilbert space $\cG$ -- in applications typically the $L^2$-space on 
the boundary of some domain $\Omega$ -- and two boundary mappings $\Gamma_0$ and $\Gamma_1$ 
that satisfy an abstract second Green identity. A natural choice are the Neumann and Dirichlet trace operators if one deals with the Laplacian in $L^2(\Omega)$.
The boundary mappings are defined on the domain of some operator $T$ which is a core of the maximal operator; in the case of the Laplacian, the core 
$H^2(\Omega)$ is often a convenient choice.
The Weyl function corresponding to a quasi boundary triple 
can be viewed as the abstract counterpart of the Dirichlet-to-Neumann map and is an important analytic object
since it can be used to characterize the spectrum of the self-adjoint realizations in this theory. One uses abstract boundary conditions to define restrictions of $T$ in the form
\begin{equation*}
 A_{[B]}f=Tf,\qquad \dom A_{[B]}=\bigl\{f\in\dom T:\Gamma_0 f=B\Gamma_1 f\bigr\},
\end{equation*}
where $B$ is an operator in the boundary space $\cG$. It is an immediate consequence 
of the abstract second Green identity that a symmetric operator $B$ leads to a symmetric operator $A_{[B]}$, but in general a self-adjoint boundary parameter 
$B$ does not induce a self-adjoint operator $A_{[B]}$ -- a fact that is not too surprising when taking into account that 
the range of the boundary mappings is not necessarily the whole boundary space $\cG$; cf. Definition~\ref{defi_Quasi_boundary_triple}.

It is one of the main objectives of the present note to provide a new useful sufficient condition on the boundary parameter $B$ and the properties of the Weyl function
to ensure self-adjointness of the extension $A_{[B]}$. Here we generalize a recent result from \cite{BeLaLoRo2017_1} by allowing boundary operators $B$ that are factorized in 
the form $B=B_1B_2$, or more general $B\subset B_1B_2$. The assumptions on $B$ in \cite[Theorem~2.6]{BeLaLoRo2017_1} are here replaced by similar ones on $B_1$ and $B_2$.
We refer the reader to Theorem~\ref{satz_Selfadjoint_boundary_operators} and the discussion afterwards for more details.

As an example and illustration for the abstract techniques we discuss the Laplacian on the half-space $\dR^d_+=\{x\in\dR^d : x_d>0\}$ 
in any dimension $d\geq 2$ in Section~\ref{sec3}. The key feature is that Theorem~\ref{satz_Selfadjoint_boundary_operators}
and Corollary~\ref{cor2_Selfadjoint_boundary_operators} can be applied for the Laplace operator with Robin 
boundary conditions $\tau_N f=\alpha\tau_D f$ on $\partial\dR^d_+\simeq\dR^{d-1}$, where
$$\alpha\in L^p(\dR^{d-1})+L^\infty(\dR^{d-1})$$ is real-valued. 
In Theorem~\ref{satz_Representation_via_boundary_values} we have the slightly stronger 
assumption $p>\frac{4}{3}(d-1)$ if $d\geq 3$ and $p>2$ if $d=2$ than the usual form method requires (namely, $p=d-1$ if $d\geq 3$ and $p>1$ if $d=2$  
is sufficient by Proposition \ref{prop_Operator_via_Quadratic_form}), but also at the same time a higher Sobolev regularity for the operator domain.
For other related variants of Theorem~\ref{satz_Representation_via_boundary_values} we also refer the reader to \cite[Theorem 7.2]{AGW14} which 
provides $H^2$-regularity for more general
second order elliptic differential expressions on  
certain unbounded non-smooth domains (see Remark~\ref{agwrem}), to
\cite[Theorem 4.5 and Lemma 5.3]{GM09} for the case of Laplacians on bounded Lipschitz domains, 
and to \cite[Section~2]{FL08}.
In this context we also mention the 
contributions \cite{AGW14,B09,BD08,ES88,MR09,NP17} dealing with Robin Laplacians with singular boundary conditions and we refer to 
\cite{AW03,BPP18,BP16,D00,G11,HK18,HKR17,HP15,KL12,LP08,LR12,PP15,PP16,R14} for some other recent works
on spectral problems for Robin Laplacians.

\section*{Acknowledgements}
Jussi Behrndt is most grateful 
to the organizers of the conference {\it Mathematics, Signal Processing and Linear Systems:
New Problems and Directions}, November 2017, at 
Chapman University, Orange, California, for creating a stimulating atmosphere and for the great hospitality.
The authors also wish to thank Vladimir Lotoreichik, Konstantin Pankrashkin, Nicolas Popoff, and Jonathan Rohleder for helpful comments and remarks.
Both authors gratefully acknowledge financial support
by the Austrian Science Fund (FWF), project P~25162-N26,
and the Austria-Czech Republic cooperation grant  CZ02/2017 by the 
Austrian Agency for International Cooperation in Education and Research (OeAD). 

\section{Quasi boundary triples and self-adjoint extensions}

In this section we first recall the notion of quasi boundary triples and their Weyl functions in the extension theory of symmetric operators from \cite{BL07,BL12}. Afterwards we provide a new sufficient
criterion for self-adjointness in Theorem~\ref{satz_Selfadjoint_boundary_operators}, which is the main abstract result in this note. 

In the following let $\cH$ be a Hilbert space with inner product $(\cdot,\cdot)_\cH$. 
The next definition is a generalization of the concept of ordinary and generalized boundary triples; cf. \cite{BGP08,DHMS06,DM91,DM95,GG91}. 

\begin{defn}\label{defi_Quasi_boundary_triple}
Let $S$ be a densely defined, closed, symmetric operator in $\cH$ and let $T$ be a closable operator with $\overline{T}=S^*$.
A triple $\{\cG,\Gamma_0,\Gamma_1\}$ is a {\em quasi boundary triple} for $T\subset S^*$ if
$(\cG,(\cdot,\cdot)_\cG)$ is a Hilbert space and the linear mappings $\Gamma_0,\Gamma_1:\dom T\rightarrow\cG$ satisfy the following conditions (i)--(iii).
\begin{enumerate}
\item [(i)] The abstract second Green identity 
\begin{equation}\label{Eq_Abstract_Greens_identity}
 (Tf,g)_\cH-(f,Tg)_\cH=(\Gamma_1 f,\Gamma_0 g)_\cG-(\Gamma_0 f,\Gamma_1 g)_\cG
\end{equation}
holds for all $f,g\in\dom T$.
\item [(ii)] The range of $(\Gamma_0,\Gamma_1)^\top:\dom T\rightarrow\cG\times\cG$ is dense.
\item[(iii)] The operator $A_0:= T\upharpoonright\ker\Gamma_0$ is self-adjoint in $\cH$.
\end{enumerate}
\end{defn}

Recall from \cite{BL07,BL12} that for a densely defined, closed, symmetric operator $S$ in $\cH$ a quasi boundary triple $\{\cG,\Gamma_0,\Gamma_1\}$ 
exists if and only if the deficiency indices of $S$ coincide. In this case one has $\dom S=\ker\Gamma_0\cap\ker\Gamma_1$. The notion of quasi boundary triples 
reduces to the well-known concept of ordinary boundary triples if $T=S^*$. For more details we refer the reader to \cite{BL07,BL12}.

Assume now that $\{\cG,\Gamma_0,\Gamma_1\}$ is a quasi boundary triple for $T\subset S^*$.
In a similar way as for ordinary and generalized boundary triples in \cite{DM91,DM95}
one associates the $\gamma$-field and the Weyl function. 
Their definition and some of their properties will now be recalled very briefly. Again we refer the reader to \cite{BL07,BL12} for a more detailed exposition.
Observe first that the direct sum decomposition
\begin{equation}\label{Eq_DomT_decomposition}
 \dom T=\dom A_0\,\dot+\,\ker(T-\lambda)=\ker\Gamma_0\,\dot+\,\ker(T-\lambda),\quad \lambda\in\rho(A_0),
\end{equation}
implies that $\Gamma_0\upharpoonright\ker(T-\lambda)$ is invertible for $\lambda\in\rho(A_0)$. The $\gamma$-field $\gamma$ 
and Weyl function $M$ are
then defined as operator-valued functions on $\rho(A_0)$ by 
\begin{equation}\label{Eq_Gamma_field}
\lambda\mapsto\gamma(\lambda):=\bigl(\Gamma_0\upharpoonright\ker(T-\lambda)\bigr)^{-1}\quad\text{and}\quad \lambda\mapsto M(\lambda):=\Gamma_1\gamma(\lambda),
\end{equation}
respectively. It is clear from \eqref{Eq_DomT_decomposition} that $\dom\gamma(\lambda)=\dom M(\lambda)=\ran\Gamma_0$ for all $\lambda\in\rho(A_0)$. Moreover,
the values $\gamma(\lambda)$ of the $\gamma$-field are densely defined and bounded operators from $\cG$ into $\cH$ such that $\ran \gamma(\lambda)=\ker(T-\lambda)$.
With the help of the abstract second Green identity in (\ref{Eq_Abstract_Greens_identity}) one verifies the representation 
\begin{equation}\label{gam11}
\gamma(\overline\lambda)^*=\Gamma_1(A_0-\lambda)^{-1},\qquad \lambda\in\rho(A_0),
\end{equation}
of the adjoint $\gamma$-field, which is a bounded and everywhere defined operator from $\cH$ into $\cG$. The values $M(\lambda)$ of the Weyl function 
are operators in $\cG$ which are not necessarily closed
and in general unbounded. Note that also $\ran M(\lambda)\subset\ran\Gamma_1$ by definition.

For a given quasi boundary triple $\{\cG,\Gamma_0,\Gamma_1\}$ and an operator $B$ in $\cG$ the extension $A_{[B]}$ of $S$ in $\cH$ is defined as
\begin{equation}\label{Eq_Boundary_operator}
A_{[B]} f=Tf,\qquad \dom A_{[B]}=\bigl\{f\in\dom T: \Gamma_0f= B\Gamma_1f\bigr\}. 
\end{equation}
In contrast to ordinary boundary triples (see \cite{BGP08,DM91,GG91}) a self-adjoint boundary operator $B$ in $\cG$ does not necessarily induce a self-adjoint 
extension $A_{[B]}$ in $\cH$. There are various results in the literature that provide sufficient conditions for this conclusion to hold, 
see, e.g., \cite{BL07,BL12,BeLaLoRo2017_1}.
Our aim in the next theorem is to provide a useful generalization of a recent result in \cite{BeLaLoRo2017_1}; cf. Corollary~\ref{cor_Selfadjoint_boundary_operators}.

\begin{thm}\label{satz_Selfadjoint_boundary_operators}
Let $S$ be a densely defined, closed, symmetric operator in $\cH$ and let $\{\cG,\Gamma_0,\Gamma_1\}$ be a quasi boundary triple for $T\subset S^*$
with $A_0=T\upharpoonright\ker\Gamma_0$, $\gamma$-field $\gamma$ and Weyl function $M$. Let $\lambda_0\in\rho(A_0)\cap\dR$ and let $B$ be a symmetric operator
in $\cG$. Assume that $B\subset B_1B_2$ holds with some operators $B_1,B_2$ in $\cG$ and that the following conditions are satisfied.
\begin{enumerate}
\item[{\rm (i)}] $1\in\rho(B_2\overline{M(\lambda_0)B_1})$;
\item[{\rm (ii)}] $\ran(B_2\overline{M(\lambda_0)B_1})\subset\ran\Gamma_0\cap\dom B_1$;
\item[{\rm (iii)}] $\ran(B_1\upharpoonright\ran\Gamma_0)\subset\ran\Gamma_0$;
\item[{\rm (iv)}] $\ran(B_2\upharpoonright\ran\Gamma_1)\subset\ran\Gamma_0$; 
\item[{\rm (v)}] $\ran\Gamma_1\subset\dom B$.
\end{enumerate}
Then the extension $A_{[B]}$ in \eqref{Eq_Boundary_operator}
is a self-adjoint operator in $\cH$ and for every $\lambda\in\rho(A_0)\cap\rho(A_{[B]})$ the Krein type resolvent formula
\begin{equation}\label{Eq_Krein_formula}
(A_{[B]}-\lambda)^{-1}-(A_0-\lambda)^{-1}=\gamma(\lambda)B_1(1-B_2M(\lambda)B_1)^{-1}B_2\gamma(\overline{\lambda})^*
\end{equation}
is valid.
\end{thm}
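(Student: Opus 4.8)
The plan is to reduce the self-adjointness of $A_{[B]}$ to the surjectivity of $A_{[B]}-\lambda_0$ at the real point $\lambda_0$, and to obtain the resolvent by an explicit construction whose outcome is exactly the right-hand side of \eqref{Eq_Krein_formula}. First I would record that $A_{[B]}$ is symmetric: for $f,g\in\dom A_{[B]}$ one has $\Gamma_0 f=B\Gamma_1 f$ and $\Gamma_0 g=B\Gamma_1 g$ with $\Gamma_1 f,\Gamma_1 g\in\ran\Gamma_1\subset\dom B$ by (v), so \eqref{Eq_Abstract_Greens_identity} and the symmetry of $B$ give $(Tf,g)_\cH-(f,Tg)_\cH=(\Gamma_1 f,B\Gamma_1 g)_\cG-(B\Gamma_1 f,\Gamma_1 g)_\cG=0$. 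For a symmetric operator, surjectivity of $A_{[B]}-\lambda_0$ forces injectivity (any $x\in\ker(A_{[B]}-\lambda_0)$ is orthogonal to $\ran(A_{[B]}-\lambda_0)=\cH$), so the inverse $(A_{[B]}-\lambda_0)^{-1}$ is everywhere defined and symmetric, hence bounded and self-adjoint; consequently $A_{[B]}$ is self-adjoint with $\lambda_0\in\rho(A_{[B]})$. Thus it suffices to solve $(A_{[B]}-\lambda_0)f=h$ for arbitrary $h\in\cH$.

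The core of the argument is the construction of this $f$. Abbreviate $g:=\gamma(\overline{\lambda_0})^*h=\Gamma_1(A_0-\lambda_0)^{-1}h$, which lies in $\ran\Gamma_1$ by \eqref{gam11} (here $\overline{\lambda_0}=\lambda_0$). By (v) and $B\subset B_1B_2$ we have $g\in\dom B_2$ and $B_2 g\in\dom B_1$, and by (iv) also $B_2 g\in\ran\Gamma_0$. Using (i), I set
\[
\zeta:=\bigl(1-B_2\overline{M(\lambda_0)B_1}\bigr)^{-1}B_2 g\in\cG ,
\]
which is well defined and satisfies the fixed-point relation $\zeta-B_2\overline{M(\lambda_0)B_1}\,\zeta=B_2 g$. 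Since $B_2 g\in\ran\Gamma_0\cap\dom B_1$ and, by (ii), $B_2\overline{M(\lambda_0)B_1}\,\zeta\in\ran\Gamma_0\cap\dom B_1$, this relation yields $\zeta\in\ran\Gamma_0\cap\dom B_1$. Hence $\xi:=B_1\zeta$ is defined, and $\xi\in\ran\Gamma_0$ by (iii), so I may put
\[
f:=(A_0-\lambda_0)^{-1}h+\gamma(\lambda_0)\xi\in\dom T ,
\]
using $\dom\gamma(\lambda_0)=\ran\Gamma_0$. A direct computation then gives $\Gamma_0 f=\xi$ and $\Gamma_1 f=g+M(\lambda_0)\xi\in\ran\Gamma_1\subset\dom B$.

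It remains to verify the boundary condition $\Gamma_0 f=B\Gamma_1 f$. Since $\zeta\in\dom B_1$ and $\xi=B_1\zeta\in\ran\Gamma_0=\dom M(\lambda_0)$, the element $\zeta$ lies in $\dom(M(\lambda_0)B_1)$, so the closure in (i) acts on $\zeta$ as $M(\lambda_0)B_1$ itself and $M(\lambda_0)\xi=\overline{M(\lambda_0)B_1}\,\zeta$. Invoking $B\subset B_1B_2$ and the fixed-point relation,
\[
B\Gamma_1 f=B_1B_2\bigl(g+M(\lambda_0)\xi\bigr)=B_1\bigl(B_2 g+B_2\overline{M(\lambda_0)B_1}\,\zeta\bigr)=B_1\zeta=\xi=\Gamma_0 f .
\]
Therefore $f\in\dom A_{[B]}$, and $(A_{[B]}-\lambda_0)f=(T-\lambda_0)f=h$ because $(A_0-\lambda_0)^{-1}h\in\dom A_0$ and $\gamma(\lambda_0)\xi\in\ker(T-\lambda_0)$. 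This establishes surjectivity, hence self-adjointness, and the map $h\mapsto f$ is precisely \eqref{Eq_Krein_formula} evaluated at $\lambda=\lambda_0$.

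The step I expect to be the main obstacle is the domain bookkeeping: conditions (ii)--(v) are exactly what is needed to guarantee that each product of the (in general unbounded) operators $B_1,B_2,M(\lambda_0),\gamma(\lambda_0)$ occurring above is well defined and lands in the correct space, and the closure in (i) forces the extra verification that $\zeta\in\dom(M(\lambda_0)B_1)$ so that $\overline{M(\lambda_0)B_1}$ may be replaced by $M(\lambda_0)B_1$ at the decisive step. Finally, for arbitrary $\lambda\in\rho(A_0)\cap\rho(A_{[B]})$ I would obtain \eqref{Eq_Krein_formula} by repeating this bookkeeping: writing $f=(A_{[B]}-\lambda)^{-1}h$, the decomposition \eqref{Eq_DomT_decomposition} gives $f=(A_0-\lambda)^{-1}h+\gamma(\lambda)\Gamma_0 f$, and substituting $\Gamma_0 f=B\Gamma_1 f$ and solving the resulting boundary equation produces the stated formula, where the invertibility of $1-B_2M(\lambda)B_1$ now follows from $\lambda\in\rho(A_{[B]})$ together with the self-adjointness already established.
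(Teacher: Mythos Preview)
Your proposal is correct and follows essentially the same route as the paper: symmetry via Green's identity, surjectivity of $A_{[B]}-\lambda_0$ by constructing $f=(A_0-\lambda_0)^{-1}h+\gamma(\lambda_0)B_1\zeta$ with $\zeta=(1-B_2\overline{M(\lambda_0)B_1})^{-1}B_2\gamma(\lambda_0)^*h$, and the same domain bookkeeping via (ii)--(v) to pass from the closure to $M(\lambda_0)B_1$ itself. For general $\lambda$ the paper likewise first shows $\ker(1-B_2M(\lambda)B_1)=\{0\}$ from $\lambda\in\rho(A_{[B]})$ and then derives \eqref{Eq_Krein_formula} by decomposing $(A_{[B]}-\lambda)^{-1}h-(A_0-\lambda)^{-1}h\in\ker(T-\lambda)$, exactly as you outline.
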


In the next corollary the special case that $\ran\Gamma_0=\cG$ is formulated. In this situation the quasi boundary triple $\{\cG,\Gamma_0,\Gamma_1\}$ is a
generalized boundary triple in the sense of \cite{DHMS06,DM95}. It is clear that condition (ii) 
in Theorem~\ref{satz_Selfadjoint_boundary_operators} simplifies and that conditions (iii) and (iv) are automatically satisfied in this case.

\begin{cor}\label{cor2_Selfadjoint_boundary_operators}
Let $\{\cG,\Gamma_0,\Gamma_1\}$, $A_0=T\upharpoonright\ker\Gamma_0$, $M$ and $\lambda_0\in\rho(A_0)\cap\dR$, and $B\subset B_1B_2$ be as in 
Theorem~\ref{satz_Selfadjoint_boundary_operators}. Assume, in addition, that $\ran\Gamma_0=\cG$ and that the following conditions are satisfied.
\begin{enumerate}
\item[{\rm (i)}] $1\in\rho(B_2\overline{M(\lambda_0)B_1})$;
\item[{\rm (ii)}] $\ran(B_2\overline{M(\lambda_0)B_1})\subset\dom B_1$;
\item[{\rm (iii)}] $\ran\Gamma_1\subset\dom B$.
\end{enumerate}
Then the extension $A_{[B]}$ in \eqref{Eq_Boundary_operator}
is a self-adjoint operator in $\cH$ and for every $\lambda\in\rho(A_0)\cap\rho(A_{[B]})$ the Krein type resolvent formula
\eqref{Eq_Krein_formula}
is valid.
\end{cor}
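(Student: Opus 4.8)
The plan is to deduce Corollary~\ref{cor2_Selfadjoint_boundary_operators} directly from Theorem~\ref{satz_Selfadjoint_boundary_operators} by checking that, under the additional assumption $\ran\Gamma_0=\cG$, the three hypotheses (i)--(iii) of the corollary together with the general setting already imply all five hypotheses (i)--(v) of the theorem. Once this is established, the theorem yields both the self-adjointness of $A_{[B]}$ and the Krein type resolvent formula \eqref{Eq_Krein_formula} verbatim, so no separate argument for these two conclusions is required.

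First I would match the hypotheses one by one. Hypothesis (i) of the theorem, $1\in\rho(B_2\overline{M(\lambda_0)B_1})$, coincides literally with (i) of the corollary. For (ii), observe that $\ran\Gamma_0=\cG$ forces $\ran\Gamma_0\cap\dom B_1=\dom B_1$, so the inclusion $\ran(B_2\overline{M(\lambda_0)B_1})\subset\ran\Gamma_0\cap\dom B_1$ demanded in the theorem reduces exactly to $\ran(B_2\overline{M(\lambda_0)B_1})\subset\dom B_1$, which is precisely (ii) of the corollary. Finally, hypothesis (v) of the theorem, $\ran\Gamma_1\subset\dom B$, is exactly (iii) of the corollary.

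It then remains to dispose of hypotheses (iii) and (iv) of the theorem. Since $B_1$ and $B_2$ are operators in $\cG$, their ranges are by definition contained in $\cG$. Because $\ran\Gamma_0=\cG$, the inclusions $\ran(B_1\upharpoonright\ran\Gamma_0)\subset\ran\Gamma_0$ and $\ran(B_2\upharpoonright\ran\Gamma_1)\subset\ran\Gamma_0$ are therefore automatically satisfied; these are exactly the two conditions that the remark preceding the corollary asserts to be superfluous in the generalized boundary triple situation. Having verified (i)--(v) in this way, I would simply invoke Theorem~\ref{satz_Selfadjoint_boundary_operators} to conclude.

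I expect no genuine obstacle here, as the corollary is a pure specialization of the theorem; the only care required is the bookkeeping of domains and ranges. In particular one should confirm that the closure $\overline{M(\lambda_0)B_1}$ and the resolvent set membership in (i) are interpreted identically in both statements, and that nothing beyond $\ran\Gamma_0=\cG$ is used when collapsing the intersection in (ii) and when rendering (iii) and (iv) of the theorem vacuous.
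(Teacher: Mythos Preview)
Your proposal is correct and matches the paper's reasoning exactly: the paper states just before the corollary that when $\ran\Gamma_0=\cG$ condition (ii) of Theorem~\ref{satz_Selfadjoint_boundary_operators} simplifies and conditions (iii) and (iv) are automatically satisfied, which is precisely the bookkeeping you carry out. No separate proof is given in the paper beyond this observation, so your argument is the intended one.
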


The next corollary shows that for the special choice $B_1=I_\cG$ and $B_2=B$  
Theorem \ref{satz_Selfadjoint_boundary_operators} coincides with \cite[Theorem 2.6]{BeLaLoRo2017_1}.

\begin{cor}\label{cor_Selfadjoint_boundary_operators}
Let $\{\cG,\Gamma_0,\Gamma_1\}$, $A_0=T\upharpoonright\ker\Gamma_0$, $M$, $\gamma$ and $\lambda_0\in\rho(A_0)\cap\dR$ be as in Theorem~\ref{satz_Selfadjoint_boundary_operators} and assume that $B$ is a symmetric operator in $\cG$ such that 
the following conditions are satisfied.
\begin{enumerate}
\item[{\rm (i)}] $1\in\rho(B\overline{M(\lambda_0)})$;
\item[{\rm (ii)}] $\ran(B\overline{M(\lambda_0)})\subset\ran\Gamma_0$;
\item[{\rm (iii)}] $\ran(B\upharpoonright\ran\Gamma_1)\subset\ran\Gamma_0$; 
\item[{\rm (iv)}] $\ran\Gamma_1\subset\dom B$.
\end{enumerate}
Then the extension $A_{[B]}$ in \eqref{Eq_Boundary_operator}
is a self-adjoint operator in $\cH$ and for every $\lambda\in\rho(A_0)\cap\rho(A_{[B]})$ the Krein type resolvent formula
\begin{equation*}
(A_{[B]}-\lambda)^{-1}-(A_0-\lambda)^{-1}=\gamma(\lambda)(1-BM(\lambda))^{-1}B\gamma(\overline{\lambda})^*
\end{equation*}
is valid.
\end{cor}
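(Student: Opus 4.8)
The plan is to obtain this corollary directly from Theorem~\ref{satz_Selfadjoint_boundary_operators} by specializing to the trivial factorization $B_1=I_\cG$ and $B_2=B$, where $I_\cG$ denotes the identity operator on $\cG$. First I would check that this choice meets the standing hypothesis of the theorem: since $I_\cG$ is everywhere defined, one has $B_1B_2=I_\cG B=B$, so that $B\subset B_1B_2$ holds (indeed with equality), and $B$ is symmetric by assumption. Thus the theorem is applicable provided its five conditions hold.

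Next I would verify that conditions (i)--(v) of Theorem~\ref{satz_Selfadjoint_boundary_operators} collapse onto the four conditions (i)--(iv) stated here. Condition~(i) of the theorem reads $1\in\rho(B_2\overline{M(\lambda_0)B_1})$; because $B_1=I_\cG$ gives $M(\lambda_0)B_1=M(\lambda_0)$ and hence $\overline{M(\lambda_0)B_1}=\overline{M(\lambda_0)}$, this is precisely condition~(i) above. For condition~(ii) of the theorem, the identity $\dom B_1=\cG$ yields $\ran\Gamma_0\cap\dom B_1=\ran\Gamma_0$, so it reduces to $\ran(B\overline{M(\lambda_0)})\subset\ran\Gamma_0$, which is condition~(ii) here. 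Condition~(iii) of the theorem, $\ran(B_1\restr\ran\Gamma_0)\subset\ran\Gamma_0$, is automatic since $B_1=I_\cG$ leaves $\ran\Gamma_0$ invariant. Finally, conditions~(iv) and~(v) of the theorem coincide verbatim with conditions~(iii) and~(iv) above once $B_2=B$ is inserted.

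With all hypotheses checked, Theorem~\ref{satz_Selfadjoint_boundary_operators} gives at once the self-adjointness of $A_{[B]}$ together with the Krein type resolvent formula~\eqref{Eq_Krein_formula}. It then remains only to substitute $B_1=I_\cG$ and $B_2=B$ into that formula: the factor $\gamma(\lambda)B_1$ becomes $\gamma(\lambda)$, the inverse $(1-B_2M(\lambda)B_1)^{-1}$ becomes $(1-BM(\lambda))^{-1}$, and $B_2\gamma(\overline{\lambda})^*$ becomes $B\gamma(\overline{\lambda})^*$, which is exactly the asserted resolvent formula. Since the argument is a pure specialization, there is no genuine analytic obstacle; the only point demanding a moment's care is the bookkeeping of the operator products and their closures, in particular confirming that $\overline{M(\lambda_0)B_1}=\overline{M(\lambda_0)}$ holds because $B_1$ is the bounded, everywhere defined identity, so that the abstract condition~(i) really does reduce to the stated one.
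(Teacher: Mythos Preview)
Your proposal is correct and takes exactly the approach the paper indicates: the corollary is obtained from Theorem~\ref{satz_Selfadjoint_boundary_operators} by the specialization $B_1=I_\cG$, $B_2=B$, and your verification that the five conditions collapse to the four stated ones (with condition~(iii) of the theorem becoming automatic) is precisely the intended argument.
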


\begin{rem}
 The assumption $\lambda_0\in\rho(A_0)\cap\dR$ and conditions (i)-(ii) in Theorem~\ref{satz_Selfadjoint_boundary_operators} (and similarly in Corollary~\ref{cor2_Selfadjoint_boundary_operators} and Corollary~\ref{cor_Selfadjoint_boundary_operators}) can be replaced by assuming that there exist $\lambda_\pm\in\dC^\pm$ with the properties
\begin{enumerate}
\item[{\rm (i')}] $1\in\rho(B_2\overline{M(\lambda_\pm)B_1})$;
\item[{\rm (ii')}] $\ran(B_2\overline{M(\lambda_\pm)B_1})\subset\ran\Gamma_0\cap\dom B_1$.
\end{enumerate}
\end{rem}

\begin{proof}[Proof of Theorem \ref{satz_Selfadjoint_boundary_operators}]
The proof is split into four separate steps: First the self-adjointness of $A_{[B]}$ is shown in Steps 1 and 2 and afterwards, in Step 3 and 4, the resolvent formula (\ref{Eq_Krein_formula}) is verified.

\vspace{0.2cm}
\noindent
\textit{Step 1.} In this step we prove the inclusion
\begin{equation}\label{Eq_Step_1}
\ran(B_2\gamma(\lambda_0)^*)\subset\ran(1-B_2M(\lambda_0)B_1).
\end{equation}
Let $\psi\in\ran(B_2\gamma(\lambda_0)^*)$. Then \eqref{gam11}, (iv)--(v) and $B\subset B_1B_2$ yield
\begin{equation}\label{Eq_Selfadjoint_boundary_operators_8}
\psi\in\ran (B_2\upharpoonright\ran\Gamma_1)\subset\ran\Gamma_0\cap\dom B_1.
\end{equation}
Consider $\varphi:=(1-B_2\overline{M(\lambda_0)B_1})^{-1}\psi$, which is well-defined by (i) and observe that
\begin{equation}\label{Eq_Selfadjoint_boundary_operators_1}
\varphi-\psi=B_2\overline{M(\lambda_0)B_1}\varphi\in\ran\Gamma_0\cap\dom B_1
\end{equation}
by (ii). Combining (\ref{Eq_Selfadjoint_boundary_operators_8})--(\ref{Eq_Selfadjoint_boundary_operators_1}) we conclude $\varphi\in\ran\Gamma_0\cap\dom B_1$ and now 
(iii) shows $B_1\varphi\in\ran\Gamma_0=\dom M(\lambda_0)$. Therefore (\ref{Eq_Selfadjoint_boundary_operators_1}) can be written as
\begin{equation*}
(1-B_2M(\lambda_0)B_1)\varphi=\psi,
\end{equation*}
and hence (\ref{Eq_Step_1}) holds.

\vspace{0.2cm}
\noindent
\textit{Step 2.} We will now prove that the operator $A_{[B]}$ in \eqref{Eq_Boundary_operator} is self-adjoint in $\cH$. Note first that for $f,g\in\dom A_{[B]}$ one has 
\begin{equation*}
\begin{split}
(A_{[B]}f,g)_\cH-(f,A_{[B]}g)_\cH&=(Tf,g)_\cH-(f,Tg)_\cH\\
&=(\Gamma_1 f,\Gamma_0 g)_\cG-(\Gamma_0 f,\Gamma_1 g)_\cG\\
&=(\Gamma_1 f,B\Gamma_1 g)_\cG-(B \Gamma_1 f,\Gamma_1 g)_\cG\\
&=0
\end{split}
\end{equation*}
by the abstract second Green identity (\ref{Eq_Abstract_Greens_identity}) and the symmetry of $B$ in $\cG$. Therefore $A_{[B]}$ is symmetric in $\cH$ and hence it suffices to show that
\begin{equation}\label{Eq_Step_2}
\ran(A_{[B]}-\lambda_0)=\cH.
\end{equation}
 
Fix $h\in\cH$. By \eqref{gam11}, (v) and $B\subset B_1B_2$, the element $B_2\gamma(\lambda_0)^*h$ is well-defined and according to (\ref{Eq_Step_1}) there exists some $g\in\dom(B_2M(\lambda_0)B_1)$ such that
\begin{equation}\label{Eq_Selfadjoint_boundary_operators_2}
B_2\gamma(\lambda_0)^*h=(1-B_2M(\lambda_0)B_1)g.
\end{equation}
Consider
\begin{equation*}
f:=(A_0-\lambda_0)^{-1}h+\gamma(\lambda_0)B_1g
\end{equation*}
and note that $(T-\lambda_0) f=h $ since $\ran\gamma(\lambda_0)=\ker(T-\lambda_0)$; cf. \eqref{Eq_Gamma_field}.  We claim that $f\in\dom A_{[B]}$. In fact, since $\dom A_0=\ker\Gamma_0$
it follows from \eqref{gam11}, the definition of the $\gamma$-field and the Weyl function that
\begin{equation}\label{Eq_Selfadjoint_boundary_operators_5}
\Gamma_0 f=B_1g\quad\text{and}\quad \Gamma_1 f=\gamma(\lambda_0)^*h+M(\lambda_0)B_1g.
\end{equation}
Making use of condition (v) and $B\subset B_1B_2$ we then conclude 
\begin{equation*}
B\Gamma_1 f=B_1\bigl(B_2\gamma(\lambda_0)^*h+B_2M(\lambda_0)B_1g\bigr)=B_1 g=\Gamma_0 f
\end{equation*}
from \eqref{Eq_Selfadjoint_boundary_operators_2} and \eqref{Eq_Selfadjoint_boundary_operators_5}. Hence $f\in\dom A_{[B]}$ and $(A_{[B]}-\lambda_0)f=(T-\lambda_0) f=h $. Thus, \eqref{Eq_Step_2} holds and therefore $A_{[B]}$ is self-adjoint in $\cH$.

\vspace{0.2cm}
\noindent
\textit{Step 3.} In this step we show that 
\begin{equation}\label{Eq_Step_3}
\ker (1-B_2M(\lambda)B_1)=\{0\},\qquad \lambda\in\rho(A_0)\cap\rho(A_{[B]}).
\end{equation}
In fact, for $\varphi\in\ker(1-B_2M(\lambda)B_1)$ one has $\varphi=B_2M(\lambda)B_1\varphi\in\ran\Gamma_0$ by (iv) and $\ran M(\lambda)\subset\ran\Gamma_1$. Making use of 
(iii) we find
\begin{equation}\label{Eq_Step_4}
B_1\varphi=B_1B_2M(\lambda)B_1\varphi\in\ran\Gamma_0.
\end{equation}
Using the definition of the $\gamma$-field and the Weyl function and (v) we can rewrite \eqref{Eq_Step_4} in the form
\begin{equation*}
\Gamma_0\gamma(\lambda)B_1\varphi=B\Gamma_1\gamma(\lambda)B_1\varphi,
\end{equation*}
which shows that $\gamma(\lambda)B_1\varphi\in\dom A_{[B]}$. Since $\ran\gamma(\lambda)=\ker(T-\lambda)$ and $\lambda\in\rho(A_{[B]})$ we conclude
\begin{equation*}
\gamma(\lambda)B_1\varphi\in\ker(A_{[B]}-\lambda)=\{0\},
\end{equation*}
and hence $\varphi=B_2\Gamma_1\gamma(\lambda)B_1\varphi=0$. We have shown (\ref{Eq_Step_3}).

\vspace{0.2cm}
\noindent
\textit{Step 4.} For $\lambda\in\rho(A_0)\cap\rho(A_{[B]})$ we prove $\ran(B_2\gamma(\overline{\lambda})^*)\subset\ran(1-B_2M(\lambda)B_1)$ and the resolvent formula (\ref{Eq_Krein_formula}).
For $h\in \cH$ define 
\begin{equation}\label{oho}
f_B:=(A_{[B]}-\lambda)^{-1}h\quad\text{and}\quad f_0:=(A_0-\lambda)^{-1}h.
\end{equation}
Then we have $f_B-f_0\in\ker(T-\lambda)$ and hence
\begin{equation}\label{Eq_Krein_formula_2}
\gamma(\lambda)\Gamma_0(f_B-f_0)=f_B-f_0.
\end{equation}
Furthermore, the definitions of $A_0$, $A_{[B]}$ and \eqref{gam11} show
\begin{equation}\label{Eq_Krein_formula_3}
\Gamma_0 f_0=0,\quad \Gamma_1 f_0=\gamma(\overline{\lambda})^*h,\quad\text{and}\quad\Gamma_0f_B=B\Gamma_1f_B.
\end{equation}
The element $B_2M(\lambda)B_1B_2\Gamma_1f_B$ is well-defined by (iii)--(v) and using (\ref{Eq_Krein_formula_3}) we obtain 
\begin{align*}
(1-B_2M(\lambda)B_1)B_2\Gamma_1f_B&=B_2\Gamma_1f_B-B_2M(\lambda)\Gamma_0f_B \\
&=B_2\Gamma_1f_B-B_2M(\lambda)\Gamma_0(f_B-f_0) \\
&=B_2\Gamma_1f_B-B_2\Gamma_1(f_B-f_0) \\
&=B_2\gamma(\overline{\lambda})^*h.
\end{align*}
Since $1-B_2M(\lambda)B_1$ is invertible according to (\ref{Eq_Step_3}) we conclude
\begin{equation*}
B_2\Gamma_1f_B=(1-B_2M(\lambda)B_1)^{-1}B_2\gamma(\overline{\lambda})^*h.
\end{equation*}
Using again $\Gamma_0(f_B-f_0)=B\Gamma_1f_B=B_1B_2\Gamma_1 f_B$ from (\ref{Eq_Krein_formula_3}) as well as (\ref{Eq_Krein_formula_2}) leads to
\begin{equation*}
f_B-f_0=\gamma(\lambda)B_1(1-B_2M(\lambda)B_1)^{-1}B_2\gamma(\overline{\lambda})^*h.
\end{equation*}
Now the Krein type resolvent formula (\ref{Eq_Krein_formula}) follows from \eqref{oho}.
\end{proof}

\section{An example: Laplacians on the half-space with singular Robin boundary conditions}
\label{sec3}

In this section we illustrate our abstract techniques from the previous section by applying Corollary~\ref{cor2_Selfadjoint_boundary_operators}
to an explicit boundary value problem. On the upper half-space $\dR^d_+=\{x\in\dR^d : x_d>0\}$ in $d\geq 2$ dimensions we consider the Laplacian with Robin boundary 
conditions $\tau_Nf=\alpha\tau_Df$ on $\partial\dR^d_+\simeq\dR^{d-1}$ involving an unbounded parameter function $\alpha:\dR^{d-1}\rightarrow\dR$. Here $\tau_D$ and $\tau_N$ denote the Dirichlet and Neumann trace operator, respectively.

In order to construct a suitable quasi boundary triple consider the operators
\begin{equation*}
 Tf=-\Delta f,\qquad\dom T=\bigl\{f\in H^{3/2}(\dR^d_+):\Delta f\in L^2(\dR^d_+)\bigr\},
\end{equation*}
and 
\begin{equation*}
 Sf=-\Delta f,\qquad\dom S=\bigl\{f\in H^{2}(\dR^d_+):\tau_D f=\tau_N f=0\bigr\},
\end{equation*}
as well as the boundary mappings
\begin{equation*}
 \Gamma_0 f=\tau_N f\quad\text{and}\quad\Gamma_1 f=\tau_D f,\qquad f\in\dom T.
\end{equation*}
The following proposition is essentially a consequence of the properties of the Dirichlet and Neumann trace operators and can be proved with standard techniques; cf. \cite[Proposition 4.6]{BL07}. The form of the Weyl function follows from \cite[(9.65)]{Grubb}.

\begin{prop}\label{prop_Quasi_boundary_triple_of_the_laplacian}
Let $S$, $T$, $\Gamma_0$ and $\Gamma_1$ be as above. 
Then $\{L^2(\dR^{d-1}),\Gamma_0,\Gamma_1\}$ is a quasi boundary triple for $T\subset S^*$ such that 
\begin{equation*}
\ran\Gamma_0=L^2(\dR^{d-1})\qquad\text{and}\qquad\ran\Gamma_1=H^1(\dR^{d-1}). 
\end{equation*}
Furthermore, $A_0=T\upharpoonright\ker\Gamma_0$ coincides with the Neumann Laplacian
\begin{equation*}
 A_Nf=-\Delta f,\qquad\dom A_N=\bigl\{f\in H^{2}(\dR^d_+):\tau_N f=0\bigr\},
\end{equation*}
and the corresponding Weyl function is given by
\begin{equation}\label{weyli}
M(\lambda)=(-\Delta_{\dR^{d-1}}-\lambda)^{-\frac{1}{2}},\qquad \lambda\in\dC\setminus[0,\infty),
\end{equation}
where $\Delta_{\dR^{d-1}}$ denotes the self-adjoint Laplacian in $L^2(\dR^{d-1})$ with domain $H^2(\dR^{d-1})$.
\end{prop}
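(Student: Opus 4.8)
The plan is to verify the three defining properties (i)--(iii) of Definition~\ref{defi_Quasi_boundary_triple}, to identify $\ker\Gamma_0$ with the Neumann realization, to pin down the ranges of $\Gamma_0$ and $\Gamma_1$, and finally to compute the Weyl function by separation of variables. Throughout, the organizing tool is the partial Fourier transform $\cF'$ in the tangential variable $x'=(x_1,\dots,x_{d-1})$: under $\cF'$ the operator $-\Delta_{\dR^{d-1}}$ becomes multiplication by $|\xi'|^2$, the half-space Laplacian turns into the family of one-dimensional operators $-\partial_{x_d}^2+|\xi'|^2$ acting in $x_d$, and every regularity, trace, and multiplier statement below reduces to an elementary computation in this representation.

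First I would set up the analytic framework. The operator $S$ is densely defined, closed and symmetric, and a routine computation identifies its adjoint with the maximal operator $S^*f=-\Delta f$ on $\{f\in L^2(\dR^d_+):\Delta f\in L^2(\dR^d_+)\}$. The identity $\overline{T}=S^*$ then amounts to showing that the $H^{3/2}$-functions with $L^2$-Laplacian form a core of the maximal operator, which follows from a standard density argument (smoothing and cut-offs in the tangential and normal directions). The decisive input is the mapping behaviour of the traces on $\dom T$: that $\tau_D:\dom T\to H^1(\dR^{d-1})$ and $\tau_N:\dom T\to L^2(\dR^{d-1})$ are well defined, bounded in the graph topology of $T$, and \emph{surjective}. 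It is exactly the choice of $H^{3/2}$-regularity (in place of $H^2$) that forces $\ran\Gamma_0=\tau_N(\dom T)=L^2(\dR^{d-1})$, i.e.\ the full boundary space, and $\ran\Gamma_1=\tau_D(\dom T)=H^1(\dR^{d-1})$.

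With these mapping properties the structural claims follow. The abstract second Green identity \eqref{Eq_Abstract_Greens_identity} I would verify first on $C_c^\infty(\overline{\dR^d_+})$ by classical integration by parts -- with the convention $\tau_N=-\partial_{x_d}\restr\partial\dR^d_+$ the boundary term is precisely $(\Gamma_1 f,\Gamma_0 g)_\cG-(\Gamma_0 f,\Gamma_1 g)_\cG$ -- and then extend it to all of $\dom T$ by density and the boundedness of the traces. For (iii) I would use elliptic regularity up to the boundary to show that every $f\in H^{3/2}(\dR^d_+)$ with $\Delta f\in L^2(\dR^d_+)$ and $\tau_N f=0$ already lies in $H^2(\dR^d_+)$; hence $\ker\Gamma_0$ coincides with the domain of the Neumann Laplacian $A_N$, whose self-adjointness is transparent under $\cF'$ as a self-adjoint Fourier multiplier. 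Condition (ii) follows from the decomposition \eqref{Eq_DomT_decomposition}: as the defining solutions range over $\ker(T-\lambda)$ their Neumann traces exhaust $\ran\Gamma_0=L^2(\dR^{d-1})$, while the Dirichlet traces of elements of $\dom A_0$ form a dense subspace, and together these produce a dense range of $(\Gamma_0,\Gamma_1)^\top$.

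Finally I would compute $\gamma$ and $M$ from their definition \eqref{Eq_Gamma_field}. For $\lambda\in\dC\setminus[0,\infty)$ and boundary datum $\varphi$ the unique decaying solution $u=\gamma(\lambda)\varphi\in\ker(T-\lambda)$ is, in the $\cF'$-representation, $\widehat{u}(\xi',x_d)=c(\xi')\,e^{-\sqrt{|\xi'|^2-\lambda}\,x_d}$ with the branch of the root having positive real part; this is legitimate because $\lambda\notin[0,\infty)$ guarantees $|\xi'|^2-\lambda\notin(-\infty,0]$, and it is precisely this decay that places $u$ in $\dom T$. Imposing the Neumann condition gives $c(\xi')=\widehat{\varphi}(\xi')/\sqrt{|\xi'|^2-\lambda}$, whence the Dirichlet trace reads $\widehat{\tau_D u}(\xi')=c(\xi')=(|\xi'|^2-\lambda)^{-1/2}\widehat{\varphi}(\xi')$, so that $M(\lambda)=\Gamma_1\gamma(\lambda)=(-\Delta_{\dR^{d-1}}-\lambda)^{-1/2}$, which is \eqref{weyli}. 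I expect the main obstacle to be the borderline trace and regularity statements of the second step -- in particular the exact surjectivity $\ran\Gamma_0=L^2(\dR^{d-1})$ and the regularity lift $\ker\tau_N\cap\dom T\subset H^2(\dR^d_+)$ -- since it is this critical $H^{3/2}$-regularity that makes $\{L^2(\dR^{d-1}),\Gamma_0,\Gamma_1\}$ a generalized (and not merely quasi) boundary triple and yields the clean multiplier form of $M$.
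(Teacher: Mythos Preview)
Your proposal is correct and in fact considerably more detailed than what the paper offers: the paper gives no proof of this proposition, stating only that it ``can be proved with standard techniques; cf.\ \cite[Proposition~4.6]{BL07}'' and that the Weyl function formula ``follows from \cite[(9.65)]{Grubb}.'' Your route via the tangential Fourier transform, the trace and regularity properties at the $H^{3/2}$ level, and the explicit separation-of-variables computation of $M(\lambda)$ is precisely the standard argument underlying those references, so your approach and the paper's (implicit) one coincide.
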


It follows from $\dom\Delta_{\dR^{d-1}}^{\frac{1}{2}}=H^1(\dR^{d-1})$, the continuity of the natural embedding $H^1(\dR^{d-1})\hookrightarrow H^s(\dR^{d-1})$ for $s\leq 1$ and (\ref{weyli}) that 
\begin{equation}\label{referhere}
 M_{2,s}(\lambda):L^2(\dR^{d-1})\rightarrow H^s(\dR^{d-1}),\qquad\varphi\mapsto M_{2,s}(\lambda)\varphi\coloneqq M(\lambda)\varphi,
\end{equation}
is a bounded operator for every $s\leq 1$.
Moreover, the next lemma shows that the values $M(\lambda)$ 
of the Weyl function also induce densely defined and bounded operators from $L^p(\dR^{d-1})$ into $H^s(\dR^{d-1})$ 
for certain values of $p$ and $s$. This is essentially a consequence of the mapping properties of the resolvent of the Laplacian on $\dR^{d-1}$; 
for the convenience of the reader we provide
a short proof.

\begin{lem}\label{lem_Sobolev_Boundedness_of_Weyl_function}
Let $M$ be the Weyl function of the quasi boundary triple in Proposition~\ref{prop_Quasi_boundary_triple_of_the_laplacian}. 
For $\lambda\in\dC\setminus[0,\infty)$, $p\in[1,2)$ and $s<1-(d-1)(\frac{1}{p}-\frac{1}{2})$ the restriction
\begin{equation*}
 M_{p,s}(\lambda):L^p(\dR^{d-1})\rightarrow H^s(\dR^{d-1}),\qquad\varphi\mapsto M_{p,s}(\lambda)\varphi\coloneqq M(\lambda)\varphi, 
\end{equation*}
with $\dom M_{p,s}(\lambda)=L^p(\dR^{d-1})\cap L^2(\dR^{d-1})$ is a densely defined and bounded operator. 
\end{lem}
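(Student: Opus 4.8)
The plan is to pass to the Fourier transform on $\dR^{d-1}$, where by \eqref{weyli} the operator $M(\lambda)=(-\Delta_{\dR^{d-1}}-\lambda)^{-\frac12}$ acts as multiplication by the symbol $(|\xi|^2-\lambda)^{-1/2}$, and then to combine the Hausdorff--Young inequality with H\"older's inequality. First I would dispose of the density statement: since $C_c^\infty(\dR^{d-1})\subset L^p(\dR^{d-1})\cap L^2(\dR^{d-1})$ and $C_c^\infty(\dR^{d-1})$ is dense in $L^p(\dR^{d-1})$ for $p\in[1,2)$, the domain $L^p\cap L^2$ is dense in $L^p(\dR^{d-1})$; moreover $M(\lambda)\varphi$ is well defined for every $\varphi\in L^2(\dR^{d-1})$ by Proposition~\ref{prop_Quasi_boundary_triple_of_the_laplacian}. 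It therefore remains to prove the norm estimate $\|M(\lambda)\varphi\|_{H^s}\le C\|\varphi\|_{L^p}$ for all $\varphi\in L^p\cap L^2$.

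Next, Plancherel's theorem together with the multiplier description of $M(\lambda)$ gives
\begin{equation*}
\|M(\lambda)\varphi\|_{H^s}^2=\int_{\dR^{d-1}}(1+|\xi|^2)^s\,\bigl||\xi|^2-\lambda\bigr|^{-1}\,|\widehat\varphi(\xi)|^2\,d\xi.
\end{equation*}
Writing $w(\xi):=(1+|\xi|^2)^s\bigl||\xi|^2-\lambda\bigr|^{-1}$ and $p'=\tfrac{p}{p-1}\in(2,\infty]$, the Hausdorff--Young inequality yields $\widehat\varphi\in L^{p'}(\dR^{d-1})$ with $\|\widehat\varphi\|_{L^{p'}}\le\|\varphi\|_{L^p}$. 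Since $p'/2\ge 1$, H\"older's inequality with the conjugate exponents $p'/2$ and $r:=\tfrac{p}{2-p}$ gives
\begin{equation*}
\int_{\dR^{d-1}}w\,|\widehat\varphi|^2\,d\xi\le\|w\|_{L^r}\,\|\widehat\varphi\|_{L^{p'}}^2\le\|w\|_{L^r}\,\|\varphi\|_{L^p}^2,
\end{equation*}
where for $p=1$ one reads $p'=\infty$ and $r=1$, so that the estimate is immediate. Thus the assertion reduces to verifying $\|w\|_{L^r}<\infty$.

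Finally, because $\lambda\in\dC\setminus[0,\infty)$ one has $\bigl||\xi|^2-\lambda\bigr|\ge\operatorname{dist}(\lambda,[0,\infty))>0$ for every $\xi$, so $w$ is continuous and bounded and there is no singularity at finite $\xi$; the integrability of $w^r$ is governed solely by the behaviour as $|\xi|\to\infty$, where $w(\xi)$ is comparable to $|\xi|^{2s-2}$. Hence $\int_{\dR^{d-1}}w^r\,d\xi$ converges precisely when $(2s-2)r<-(d-1)$, and substituting $r=\tfrac{p}{2-p}$ together with the identity $\tfrac{2-p}{2p}=\tfrac1p-\tfrac12$ shows that this is exactly the hypothesis $s<1-(d-1)\bigl(\tfrac1p-\tfrac12\bigr)$. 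The analytic input (Hausdorff--Young and H\"older) is standard; the step requiring the most care is the exponent bookkeeping in this last computation, including the degenerate endpoint $p=1$, which must be checked to ensure that the threshold on $s$ coincides with the one stated.
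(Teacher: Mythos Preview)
Your argument is correct and follows the same route as the paper's proof: pass to the Fourier side via \eqref{weyli}, apply Hausdorff--Young to control $\widehat\varphi$ in $L^{p'}$, and use H\"older with the conjugate pair $(p'/2,\,p/(2-p))$; the paper splits into a ball and its complement before applying H\"older, whereas you verify $w\in L^{p/(2-p)}$ directly, but this is only a cosmetic reorganisation of the same estimate.
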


\begin{proof}
Denote by $\cF$ the Fourier transform in $L^2(\dR^{d-1})$. Then it follows from \eqref{weyli} that for every $\varphi\in L^2(\dR^{d-1})$ we get
\begin{equation*}
(\cF M(\lambda)\varphi)(\xi)=(|\xi|^2-\lambda)^{-\frac{1}{2}}(\cF\varphi)(\xi),\qquad\xi\in\dR^{d-1}.
\end{equation*}
Fix $r>0$ and choose a constant $C_r>0$ such that
\begin{equation*}
\frac{(1+|\xi|^2)^s}{||\xi|^2-\lambda|}\leq C_r\left\{\begin{array}{ll} 1, & \xi\in B_r\;, \\ |\xi|^{-(2-2s)}, & \xi\in\dR^{d-1}\setminus B_r\;, \end{array}\right.
\end{equation*}
where $B_r$ is the open ball with radius $r$ centered at $0$. Then for every function $\varphi\in L^p(\dR^{d-1})\cap L^2(\dR^{d-1})$ one has the estimate
\begin{equation}\label{Eq_Sobolev_Boundedness_of_Weyl_function_1}
\begin{split}
\Vert M(\lambda)\varphi\Vert^2_{H^s(\dR^{d-1})}&=\int_{\dR^{d-1}}(1+|\xi|^2)^s|(\cF M(\lambda)\varphi)(\xi)|^2d\xi\\
&=\int_{\dR^{d-1}}\frac{(1+|\xi|^2)^s}{||\xi|^2-\lambda|}|(\cF\varphi)(\xi)|^2d\xi\\
&\leq C_r\;\left(\int_{B_r}|(\cF\varphi)(\xi)|^2d\xi+\int_{\dR^{d-1}\setminus B_r}\frac{|(\cF\varphi)(\xi)|^2}{|\xi|^{2-2s}}d\xi\right).
\end{split}
\end{equation}
Using the H\"older inequality with the coefficients $\frac{p}{2-p}$ and $\frac{p}{2(p-1)}$ we obtain for the first integral
\begin{equation}\label{Eq_Sobolev_Boundedness_of_Weyl_function_5}
\int_{B_r}|(\cF\varphi)(\xi)|^2d\xi\leq|B_r|^{\frac{2-p}{p}}\Vert\cF\varphi\Vert^2_{L^{\frac{p}{p-1}}(\dR^{d-1})},
\end{equation}
and for the second integral 
\begin{equation}\label{Eq_Sobolev_Boundedness_of_Weyl_function_6}
\int_{\dR^{d-1}\setminus B_r}\frac{|(\cF\varphi)(\xi)|^2}{|\xi|^{2-2s}}d\xi\leq\left(\int_{\dR^{d-1}\setminus B_r}
|\xi|^{-\frac{(2-2s)p}{2-p}}d\xi\right)^{\frac{2-p}{p}}
\Vert\cF\varphi\Vert^2_{L^{\frac{p}{p-1}}(\dR^{d-1})}.
\end{equation}
As $s<1-(d-1)(\frac{1}{p}-\frac{1}{2})$ by assumption, we have $\frac{(2-2s)p}{2-p}>d-1$ and hence the integral on the right hand side of 
\eqref{Eq_Sobolev_Boundedness_of_Weyl_function_6} is finite. Furthermore, since the Fourier transform $\cF$ is bounded from $L^p(\dR^{d-1})$
into $L^{\frac{p}{p-1}}(\dR^{d-1})$ it follows from \eqref{Eq_Sobolev_Boundedness_of_Weyl_function_5} and \eqref{Eq_Sobolev_Boundedness_of_Weyl_function_6} 
that \eqref{Eq_Sobolev_Boundedness_of_Weyl_function_1} can finally be estimated by
\begin{equation*}
 \Vert M(\lambda)\varphi\Vert^2_{H^s(\dR^{d-1})}\leq C'\Vert\varphi\Vert^2_{L^p(\dR^{d-1})},\quad
 \varphi\in L^p(\dR^{d-1})\cap L^2(\dR^{d-1}),
\end{equation*}
with some constant $C'>0$. This completes the proof of Lemma~\ref{lem_Sobolev_Boundedness_of_Weyl_function}.
\end{proof}

The following lemma provides two important technical properties of 
the parameter function $\alpha$, which will be useful in the proof of Theorem~\ref{satz_Representation_via_boundary_values}.

\begin{lem}\label{lem_Properties_of_the_potential}
Let $\alpha\in L^p(\dR^{d-1})+L^\infty(\dR^{d-1})$ for some $p>2$. Then for every $t\in(0,1]$ one has
\begin{equation}\label{Eq_Lebesgue_regularity_of_the_power}
|\alpha|^t\in L^{\frac{p}{t}}(\dR^{d-1})+L^\infty(\dR^{d-1})
\end{equation}
and there exists a constant $C_\alpha>0$ such that
\begin{equation}\label{Eq_Boundedness_Hs}
\Vert|\alpha|^t\varphi\Vert_{L^2(\dR^{d-1})}\leq C_\alpha\Vert\varphi\Vert_{H^{\frac{t(d-1)}{p}}(\dR^{d-1})}
\end{equation}
holds for every $\varphi\in H^{\frac{t(d-1)}{p}}(\dR^{d-1})$.
\end{lem}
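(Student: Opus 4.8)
The plan is to establish the two assertions in turn, deriving the multiplier bound \eqref{Eq_Boundedness_Hs} from the Lebesgue regularity \eqref{Eq_Lebesgue_regularity_of_the_power} together with a single Sobolev embedding. Throughout I fix a decomposition $\alpha=\alpha_p+\alpha_\infty$ with $\alpha_p\in L^p(\dR^{d-1})$ and $\alpha_\infty\in L^\infty(\dR^{d-1})$. For \eqref{Eq_Lebesgue_regularity_of_the_power} I would exploit that $x\mapsto x^t$ is subadditive on $[0,\infty)$ for $t\in(0,1]$, which gives the pointwise bound $|\alpha|^t\le(|\alpha_p|+|\alpha_\infty|)^t\le|\alpha_p|^t+|\alpha_\infty|^t$. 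To turn this into an honest $L^{p/t}+L^\infty$ splitting, set $g_2:=\min\{|\alpha|^t,\Vert\alpha_\infty\Vert_\infty^t\}$, so that $g_2\in L^\infty(\dR^{d-1})$ trivially, and $g_1:=|\alpha|^t-g_2\ge 0$. On the set where $g_2=|\alpha|^t$ one has $g_1=0$, and on the set where $g_2=\Vert\alpha_\infty\Vert_\infty^t$ the subadditivity estimate yields $g_1=|\alpha|^t-\Vert\alpha_\infty\Vert_\infty^t\le|\alpha|^t-|\alpha_\infty|^t\le|\alpha_p|^t$; hence $\int|g_1|^{p/t}\le\int|\alpha_p|^p<\infty$ and $g_1\in L^{p/t}(\dR^{d-1})$. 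This proves \eqref{Eq_Lebesgue_regularity_of_the_power}. (Alternatively one may split along the level set $\{|\alpha|>2\Vert\alpha_\infty\Vert_\infty\}$, on which $|\alpha|\le2|\alpha_p|$.)

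For \eqref{Eq_Boundedness_Hs} I put $s:=\frac{t(d-1)}{p}$ and use the splitting $|\alpha|^t=g_1+g_2$ just constructed. The $L^\infty$-part is immediate: $\Vert g_2\varphi\Vert_{L^2(\dR^{d-1})}\le\Vert g_2\Vert_\infty\Vert\varphi\Vert_{L^2(\dR^{d-1})}\le\Vert g_2\Vert_\infty\Vert\varphi\Vert_{H^s(\dR^{d-1})}$. For the $L^{p/t}$-part I would apply H\"older's inequality with the exponents $\frac{p}{t}$ and its conjugate, obtaining $\Vert g_1\varphi\Vert_{L^2(\dR^{d-1})}\le\Vert g_1\Vert_{L^{p/t}(\dR^{d-1})}\Vert\varphi\Vert_{L^r(\dR^{d-1})}$ with $\frac1r=\frac12-\frac tp$, and then control $\Vert\varphi\Vert_{L^r}$ by $\Vert\varphi\Vert_{H^s}$ via the Sobolev embedding $H^s(\dR^{d-1})\hookrightarrow L^r(\dR^{d-1})$. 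Summing the two contributions gives \eqref{Eq_Boundedness_Hs} with a constant $C_\alpha$ depending on $\Vert\alpha_p\Vert_{L^p}$, $\Vert\alpha_\infty\Vert_\infty$ and the embedding constant.

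The decisive point, which I regard as the heart of the argument, is that the integrability exponent $r$ forced by H\"older is \emph{precisely} the critical Sobolev exponent for $H^s$ on $\dR^{d-1}$: with $n=d-1$ and $s=\frac{t(d-1)}{p}$ one computes $\frac1r=\frac12-\frac tp=\frac12-\frac sn$, which is exactly the scaling relation in $H^s(\dR^n)\hookrightarrow L^r(\dR^n)$. This embedding is available because $0<s<\frac n2$; indeed $s<\frac n2$ is equivalent to $t<\frac p2$, which holds since $t\le1<\frac p2$ by the hypothesis $p>2$, and the same inequality ensures $2<r<\infty$ so that the application of H\"older is legitimate. I do not anticipate any genuine obstacle here: once the matching of the H\"older exponent with the critical Sobolev exponent is observed, only the routine verification of the constraints $0<s<\frac n2$ and $2<r<\infty$ remains, both of which reduce to $t<\frac p2$.
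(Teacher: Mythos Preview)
Your proof is correct and follows essentially the same route as the paper: split $|\alpha|^t$ into an $L^{p/t}$-part and an $L^\infty$-part, bound the $L^\infty$-contribution trivially, and handle the $L^{p/t}$-contribution by H\"older together with the critical Sobolev embedding $H^{t(d-1)/p}(\dR^{d-1})\hookrightarrow L^{2p/(p-2t)}(\dR^{d-1})$, noting that the hypothesis $p>2\ge 2t$ is exactly what makes the embedding available. The only (inessential) difference is the construction of the splitting in \eqref{Eq_Lebesgue_regularity_of_the_power}: the paper cuts at the level set $\{|\alpha|>\Vert\alpha_\infty\Vert_\infty+1\}$ and uses that this set has finite measure, whereas you truncate at $\Vert\alpha_\infty\Vert_\infty^t$ and invoke the subadditivity of $x\mapsto x^t$ to get $g_1\le|\alpha_p|^t$ directly.
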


\begin{proof}
Decompose $\alpha=\alpha_p+\alpha_\infty$ for $\alpha_p\in L^p(\dR^{d-1})$ and $\alpha_\infty
\in L^\infty(\dR^{d-1})$ and define the functions
\begin{equation*}
\beta_{\frac{p}{t}}(x)=\left\{\begin{array}{ll} |\alpha(x)|^t, & x\in K, \\ 0, & x\notin K, \end{array}\right.
\quad\textnormal{and}\quad\beta_\infty(x)=\left\{\begin{array}{ll} 0, & x\in K, \\ |\alpha(x)|^t, & x\notin K, \end{array}\right.
\end{equation*}
where $K=\{x\in\dR^{d-1} : |\alpha(x)|>\Vert\alpha_\infty\Vert_{L^\infty(\dR^{d-1})}+1\}$. Note that $K$ is contained 
in the set $\{x\in\dR^{d-1} : |\alpha_p(x)|>1\}$, which has finite measure since $\alpha_p\in L^p(\dR^{d-1})$. Hence $K$ has finite measure as well. It is obvious that $\beta_\infty\in L^\infty(\dR^{d-1})$ and moreover, the estimate
\begin{align*}
\int_{\dR^{d-1}}|\beta_{\frac{p}{t}}(x)|^{\frac{p}{t}}dx&=\int_K|\alpha_p(x)+\alpha_\infty(x)|^p dx \\
&\leq 2^{p-1}\left(\int_K|\alpha_p(x)|^pdx+\int_K|\alpha_\infty(x)|^pdx\right) \\
&\leq 2^{p-1}\left(\Vert\alpha_p\Vert^p_{L^p(\dR^{d-1})}+|K|\;\Vert\alpha_\infty\Vert^p_{L^\infty(\dR^{d-1})}\right)
\end{align*}
shows that $\beta_{\frac{p}{t}}\in L^{\frac{p}{t}}(\dR^{d-1})$. Hence $|\alpha|^t=\beta_{\frac{p}{t}}+\beta_\infty\in L^{\frac{p}{t}}(\dR^{d-1})+L^\infty(\dR^{d-1})$.

\vspace{0.2cm}

Using the decomposition $|\alpha|^t=\beta_{\frac{p}{t}}+\beta_\infty$ from above, we can prove (\ref{Eq_Boundedness_Hs}) by estimating both terms separately. 
For the bounded part $\beta_\infty$ it is clear that
\begin{equation}\label{Eq_Regularity_properties_of_the_potential_1}
\begin{split}
\Vert\beta_\infty\varphi\Vert_{L^2(\dR^{d-1})}&\leq\Vert\beta_\infty\Vert_{L^\infty(\dR^{d-1})}\Vert\varphi\Vert_{L^2(\dR^{d-1})}\\
&\leq\Vert\beta_\infty\Vert_{L^\infty(\dR^{d-1})}\Vert\varphi\Vert_{H^{\frac{t(d-1)}{p}}(\dR^{d-1})}
\end{split}
\end{equation}
holds for all $\varphi\in H^{\frac{t(d-1)}{p}}(\dR^{d-1})$.
For the estimate of the unbounded part $\beta_{\frac{p}{t}}$ note first that by assumption we ensured $p>2\geq 2t$. Hence the H\"older inequality with the coefficients $\frac{p}{2t}$ and $\frac{p}{p-2t}$ yields
\begin{equation}\label{Eq_Regularity_properties_of_the_potential_2}
\begin{split}
\Vert\beta_{\frac{p}{t}}\varphi\Vert_{L^2(\dR^{d-1})}&\leq\Vert\beta_{\frac{p}{t}}\Vert_{L^{\frac{p}{t}}(\dR^{d-1})}\Vert\varphi\Vert_{L^{\frac{2p}{p-2t}}(\dR^{d-1})}\\
&\leq C\;\Vert\beta_{\frac{p}{t}}\Vert_{L^{\frac{p}{t}}(\dR^{d-1})}\Vert\varphi\Vert_{H^{\frac{t(d-1)}{p}}(\dR^{d-1})}
\end{split}
\end{equation}
for all $\varphi\in H^{\frac{t(d-1)}{p}}(\dR^{d-1})$, 
where $C>0$ is the constant of the Sobolev embedding theorem \cite[Theorem~8.12.4 Case I]{B12}. 
Combining (\ref{Eq_Regularity_properties_of_the_potential_1}) and (\ref{Eq_Regularity_properties_of_the_potential_2}) leads to the estimate (\ref{Eq_Boundedness_Hs}).
\end{proof}

In the next lemma we recall a simple estimate for functions $f\in H^1(\dR^d_+)$. For the convenience of the reader we provide a short proof.

\begin{lem}\label{lem_Epsilon_estimate} 
Let $s\in[0,1)$. Then for every $\varepsilon>0$ there exists a constant $C_\varepsilon>0$ such that
\begin{equation}\label{ok33}
\Vert f\Vert_{H^s(\dR^d_+)}^2\leq\varepsilon\Vert\nabla f\Vert^2_{L^2(\dR^d_+,\dC^d)}+C_\varepsilon\Vert f\Vert^2_{L^2(\dR^d_+)}
\end{equation}
holds for every $f\in H^1(\dR^d_+)$.
\end{lem}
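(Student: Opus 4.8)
The plan is to prove the interpolation-type inequality \eqref{ok33} by reducing it to the well-known Ehrling-type (or ``small $\varepsilon$'') estimate on the full space $\dR^d$ via a bounded extension operator, and then combining a genuine interpolation inequality with Young's inequality for the resulting product of norms. The key point is that $s<1$ strictly, so that $H^s$ sits \emph{properly} between $L^2$ and $H^1$, which is exactly what makes the $\varepsilon$-dependent splitting possible.

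\emph{First} I would fix an extension operator $E:H^1(\dR^d_+)\to H^1(\dR^d)$ that is simultaneously bounded as a map $L^2(\dR^d_+)\to L^2(\dR^d)$ and $H^1(\dR^d_+)\to H^1(\dR^d)$ (the standard reflection extension has this property), so that it suffices to prove \eqref{ok33} with $\dR^d_+$ replaced by $\dR^d$, up to adjusting constants. On the full space one passes to the Fourier side, where for $u=Ef$ one has, with $C_0:=\|E\|$ controlling all the norms,
\begin{equation*}
\Vert u\Vert_{H^s(\dR^d)}^2=\int_{\dR^d}(1+|\xi|^2)^s|\hat u(\xi)|^2\,d\xi.
\end{equation*}
\emph{Next} I would use the elementary pointwise bound, valid for $s\in[0,1)$ and any $\varepsilon>0$, that there is $C_\varepsilon>0$ with
\begin{equation*}
(1+|\xi|^2)^s\le\varepsilon\,(1+|\xi|^2)+C_\varepsilon,\qquad \xi\in\dR^d,
\end{equation*}
which follows from Young's inequality $a^s\le\varepsilon a+C_\varepsilon$ applied to $a=1+|\xi|^2$ (here $s<1$ is essential, as it forces $a^s$ to grow strictly slower than $a$). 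Integrating this against $|\hat u(\xi)|^2$ gives
\begin{equation*}
\Vert u\Vert_{H^s(\dR^d)}^2\le\varepsilon\,\Vert u\Vert_{H^1(\dR^d)}^2+C_\varepsilon\Vert u\Vert_{L^2(\dR^d)}^2,
\end{equation*}
and since $\Vert u\Vert_{H^1(\dR^d)}^2=\Vert u\Vert_{L^2(\dR^d)}^2+\Vert\nabla u\Vert_{L^2(\dR^d)}^2$, this already has the desired shape after absorbing the extra $\varepsilon\Vert u\Vert_{L^2}^2$ term into the $C_\varepsilon\Vert u\Vert_{L^2}^2$ term.

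\emph{Finally} I would transfer back to the half-space through the bounds $\Vert u\Vert_{L^2(\dR^d)}\le C_0\Vert f\Vert_{L^2(\dR^d_+)}$ and $\Vert\nabla u\Vert_{L^2(\dR^d)}\le C_0\Vert f\Vert_{H^1(\dR^d_+)}$, together with $\Vert f\Vert_{H^s(\dR^d_+)}\le\Vert Ef\Vert_{H^s(\dR^d)}$, renaming the constant $\varepsilon C_0^2$ as the new $\varepsilon$ and adjusting $C_\varepsilon$ accordingly. I do not expect a genuine obstacle here; the only point requiring mild care is the bookkeeping of constants under the extension operator, so that the small parameter multiplying $\Vert\nabla f\Vert_{L^2}^2$ can still be made arbitrarily small while the $\Vert f\Vert_{L^2}^2$ coefficient is allowed to blow up. An equally clean alternative, avoiding the extension operator entirely, is to invoke the standard interpolation inequality $\Vert f\Vert_{H^s}\le C\Vert f\Vert_{L^2}^{1-s}\Vert f\Vert_{H^1}^{s}$ on $\dR^d_+$ and then apply Young's inequality with exponents $\tfrac1s$ and $\tfrac1{1-s}$ to split the product, which directly yields \eqref{ok33}; this is perhaps the shortest route and is the one I would write up.
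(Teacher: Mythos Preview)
Your proposal is correct and follows essentially the same route as the paper: extend to the full space via a bounded extension operator, use the Ehrling-type estimate $\Vert u\Vert_{H^s(\dR^d)}\le\varepsilon'\Vert u\Vert_{H^1(\dR^d)}+C_{\varepsilon'}\Vert u\Vert_{L^2(\dR^d)}$ there, and transfer back. The only difference is cosmetic---the paper cites \cite[Theorem~3.30]{HT08} for the full-space estimate, whereas you prove it directly via the Fourier-side pointwise bound $(1+|\xi|^2)^s\le\varepsilon(1+|\xi|^2)+C_\varepsilon$, which makes your argument slightly more self-contained.
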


\begin{proof}
Recall from \cite[\textsection 3~Theorem 5]{S70} that there exists an extension operator 
$E:L^2(\dR^d_+)\rightarrow L^2(\dR^d)$ which satisfies
\begin{equation}\label{eee}
\Vert Eg\Vert_{L^2(\dR^d)}\leq c_E\Vert g\Vert_{L^2(\dR^d_+)}\quad\textnormal{and}\quad
\Vert Ef\Vert_{H^1(\dR^d)}\leq c_E\Vert f\Vert_{H^1(\dR^d_+)}
\end{equation}
for some $c_E>0$ and all $g\in L^2(\dR^d_+)$, $f\in H^1(\dR^d_+)$. From \cite[Theorem~3.30]{HT08} we can conclude that for $\varepsilon'>0$ there exists $C_{\varepsilon'}>0$ such that
\begin{equation*}
\Vert f\Vert_{H^s(\dR^d_+)}\leq\Vert Ef\Vert_{H^s(\dR^d)}
\leq \varepsilon'\Vert Ef\Vert_{H^1(\dR^d)}+C_{\varepsilon'}\Vert Ef\Vert_{L^2(\dR^d)}
\end{equation*}
for every $f\in H^1(\dR^d_+)$. Together with \eqref{eee} this leads to \eqref{ok33}.
\end{proof}

After these preparations we are now ready to formulate and 
prove the main theorem of this section.

\begin{thm}\label{satz_Representation_via_boundary_values}
Let $\alpha\in L^p(\dR^{d-1})+L^\infty(\dR^{d-1})$ be a real-valued function and assume that 
$p>\frac{4}{3}(d-1)$ if $d\geq 3$ and $p>2$ if $d=2$. Then the Robin-Laplacian
\begin{equation}\label{Eq_Robin_Laplacian_QBT}
A_\alpha f=-\Delta f,\qquad\dom A_\alpha=\left\{f\in H^{3/2}(\dR^d_+) : \begin{array}{l} \Delta f\in L^2(\dR^d_+), \\ 
\alpha\tau_Df=\tau_Nf \end{array}\right\},
\end{equation}
is self-adjoint in $L^2(\dR^d_+)$ and for every $\lambda\in\rho(A_\alpha)\setminus[0,\infty)$ the Krein type resolvent formula
\begin{equation*}
\begin{split}
&(A_\alpha-\lambda)^{-1}-(A_N-\lambda)^{-1} \\
&\hspace{0.5cm}=\gamma(\lambda)\sig(\alpha)|\alpha|^{\frac{1}{3}}\bigl(1+|\alpha|^{\frac{2}{3}}(-\Delta_{\dR^{d-1}}-\lambda)^{-\frac{1}{2}}\sig(\alpha)|\alpha|^{\frac{1}{3}}\bigr)^{-1}
|\alpha|^{\frac{2}{3}}\gamma(\overline{\lambda})^*
\end{split}
\end{equation*}
is valid.
\end{thm}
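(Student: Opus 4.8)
The plan is to realise $A_\alpha$ as a boundary value extension $A_{[B]}$ of the quasi boundary triple from Proposition~\ref{prop_Quasi_boundary_triple_of_the_laplacian} and to invoke Corollary~\ref{cor2_Selfadjoint_boundary_operators}. Since $\Gamma_0=\tau_N$ and $\Gamma_1=\tau_D$, the Robin condition $\tau_Nf=\alpha\tau_Df$ is exactly $\Gamma_0f=B\Gamma_1f$ with $B$ the maximal multiplication operator by the real function $\alpha$ in $\cG=L^2(\dR^{d-1})$; this $B$ is symmetric, and because $\ran\Gamma_0=L^2(\dR^{d-1})=\cG$ the triple is generalized, so Corollary~\ref{cor2_Selfadjoint_boundary_operators} is the right tool. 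The essential step is to \emph{factorize} $B\subset B_1B_2$ through the multiplication operators $B_1=\sig(\alpha)|\alpha|^{1/3}$ and $B_2=|\alpha|^{2/3}$, which commute and satisfy $B_1B_2=\sig(\alpha)|\alpha|=\alpha=B$. As reference point I would take $\lambda_0=-\mu$ with $\mu>0$ large; this lies in $\rho(A_N)\cap\dR$ because $\sigma(A_N)=[0,\infty)$.

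The whole verification rests on three mapping properties on the boundary. First, Lemma~\ref{lem_Properties_of_the_potential} with $t=\tfrac13$ gives $\||\alpha|^{1/3}\chi\|_{L^2}\le C\|\chi\|_{H^{(d-1)/(3p)}}$, and dualizing the symmetric operator $B_1$ shows that it extends to a bounded map $L^2(\dR^{d-1})\to H^{-(d-1)/(3p)}(\dR^{d-1})$. Second, the resolvent power $M(\lambda_0)=(-\Delta_{\dR^{d-1}}-\lambda_0)^{-1/2}$ gains one Sobolev derivative, so $\overline{M(\lambda_0)B_1}$ is a bounded operator from $L^2(\dR^{d-1})$ into $H^{1-(d-1)/(3p)}(\dR^{d-1})$. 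Third, Lemma~\ref{lem_Properties_of_the_potential} with $t=\tfrac23$ and $t=1$ yields that $|\alpha|^{2/3}$ and $|\alpha|$ are bounded from $H^{2(d-1)/(3p)}(\dR^{d-1})$, respectively $H^{(d-1)/p}(\dR^{d-1})$, into $L^2(\dR^{d-1})$. Condition (iii) of Corollary~\ref{cor2_Selfadjoint_boundary_operators}, namely $\ran\Gamma_1=H^1(\dR^{d-1})\subset\dom B$, is immediate from the last property with $t=1$, since $p>d-1$ gives the embedding $H^1\hookrightarrow H^{(d-1)/p}$ and hence $\alpha\varphi\in L^2$ for all $\varphi\in H^1(\dR^{d-1})$.

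For condition (ii) I would use $|\alpha|^{1/3}B_2=|\alpha|$ together with $\ran\overline{M(\lambda_0)B_1}\subset H^{1-(d-1)/(3p)}$: then $|\alpha|^{1/3}B_2\overline{M(\lambda_0)B_1}\varphi=|\alpha|\,\overline{M(\lambda_0)B_1}\varphi$ lies in $L^2$ as soon as $H^{1-(d-1)/(3p)}\hookrightarrow H^{(d-1)/p}$, that is, as soon as $1-\tfrac{d-1}{3p}\ge\tfrac{d-1}{p}$, i.e. $p\ge\tfrac43(d-1)$. This is precisely the standing hypothesis, and it gives $\ran(B_2\overline{M(\lambda_0)B_1})\subset\dom B_1$. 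For condition (i) the same chain of mappings shows that $B_2\overline{M(\lambda_0)B_1}$ is bounded on $L^2(\dR^{d-1})$ (here only $H^{1-(d-1)/(3p)}\hookrightarrow H^{2(d-1)/(3p)}$, i.e. $p\ge d-1$, is needed); viewing $M(\lambda_0)$ as a map $H^{-(d-1)/(3p)}\to H^{2(d-1)/(3p)}$ with derivative gain $\tfrac{d-1}{p}<1$, a scaling estimate for $(-\Delta_{\dR^{d-1}}+\mu)^{-1/2}$ shows that this operator norm tends to $0$ as $\mu\to\infty$. Hence $\|B_2\overline{M(\lambda_0)B_1}\|_{L^2\to L^2}<1$ for $\mu$ large, and a Neumann series gives $1\in\rho(B_2\overline{M(\lambda_0)B_1})$.

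With (i)--(iii) verified, Corollary~\ref{cor2_Selfadjoint_boundary_operators} shows that $A_{[B]}$ is self-adjoint; since $\ran\Gamma_1\subset\dom B$ its domain $\{f\in\dom T:\tau_Nf=\alpha\tau_Df\}$ equals $\dom A_\alpha$, so $A_\alpha$ is self-adjoint. Substituting $B_1=\sig(\alpha)|\alpha|^{1/3}$, $B_2=|\alpha|^{2/3}$ and $M(\lambda)=(-\Delta_{\dR^{d-1}}-\lambda)^{-1/2}$ into the resolvent formula \eqref{Eq_Krein_formula} then produces the stated Krein type formula. I expect the regularity balance in condition (ii) to be the main obstacle: the exponents $\tfrac13$ and $\tfrac23$ are chosen exactly so that the single derivative produced by $M(\lambda_0)$ can be distributed over the two factors of $\alpha$, and the threshold $p>\tfrac43(d-1)$ is the precise point at which $H^{1-(d-1)/(3p)}\hookrightarrow H^{(d-1)/p}$ still holds, while the separate requirement $p>2$ for $d=2$ is forced by the applicability of Lemma~\ref{lem_Properties_of_the_potential}.
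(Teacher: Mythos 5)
Your proposal is correct and uses the same skeleton as the paper: the same factorization $B\subset B_1B_2$ with $B_1=\sig(\alpha)|\alpha|^{1/3}$, $B_2=|\alpha|^{2/3}$, the same appeal to Corollary~\ref{cor2_Selfadjoint_boundary_operators} with a large negative $\lambda_0$, and the same use of Lemma~\ref{lem_Properties_of_the_potential} for condition (iii) and for reducing condition (ii) to $\ran\overline{M(\lambda_0)B_1}\subset H^{(d-1)/p}(\dR^{d-1})$. Where you genuinely diverge is in how the two key mapping estimates are obtained. For the range/boundedness of $\overline{M(\lambda_0)B_1}$ you dualize Lemma~\ref{lem_Properties_of_the_potential} to get $B_1:L^2\to H^{-(d-1)/(3p)}$ and then let $M(\lambda_0)$ gain one derivative; the paper instead proves the $L^q\to H^s$ boundedness of $M(\lambda_0)$ directly (Lemma~\ref{lem_Sobolev_Boundedness_of_Weyl_function}) and combines it with the decomposition $\sig(\alpha)|\alpha|^{1/3}=\beta_{3p}+\beta_\infty$ and H\"older, arriving at the same threshold $p>\tfrac{4}{3}(d-1)$. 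For condition (i) you use that the Fourier multiplier $(|\xi|^2+\mu)^{-1/2}$ has $H^{-s_1}\to H^{s_2}$ norm tending to $0$ as $\mu\to\infty$ whenever $s_1+s_2<1$ (here $s_1+s_2=\tfrac{d-1}{p}<1$), so that $\Vert B_2\overline{M(\lambda_0)B_1}\Vert<1$ by a three-factor norm bound; the paper instead runs a variational argument through the first Green identity, the identities $\tau_N\gamma(\lambda_0)B_1\varphi=B_1\varphi$, and the $\varepsilon$-trace estimates of Lemma~\ref{lem_Epsilon_estimate}, which is why it must fix $\lambda_0=-2\max\{c_1,c_2\}$ explicitly. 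Your route is shorter and has the side benefit of producing $B_2\overline{M(\lambda_0)B_1}$ directly as an everywhere defined bounded operator, so the paper's final step identifying $\overline{B_2M(\lambda_0)B_1}$ with $B_2\overline{M(\lambda_0)B_1}$ becomes unnecessary; on the other hand it leans entirely on $M(\lambda)$ being an explicit constant-coefficient multiplier on $\dR^{d-1}$, whereas the paper's Green-identity argument is the one that survives for variable coefficients or non-flat boundaries. The only points you should make explicit in a write-up are (a) that the dual extension of $B_1$ agrees with $|\alpha|^{1/3}\varphi$ for $\varphi\in\dom B_1$, so that the composite really is the closure $\overline{M(\lambda_0)B_1}$, and (b) the supremum computation behind the decay of the multiplier norm; both are routine.
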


\begin{proof}
This theorem is a consequence of Corollary \ref{cor2_Selfadjoint_boundary_operators} and hence in the following it will be shown that its assumptions are satisfied. We start by defining
the multiplication operator
\begin{equation*}
B\varphi=\alpha\varphi,\qquad\dom B=H^1(\dR^{d-1}),
\end{equation*}
in the boundary space $L^2(\dR^{d-1})$.
Note that by assumption we have $p>2$ as well as $p>t(d-1)$, for every $t\in(0,1]$ in any dimension $d\geq 2$. Hence by Lemma \ref{lem_Properties_of_the_potential} the estimate
\begin{equation}\label{Eq_Representation_via_boundary_values_4}
\Vert|\alpha|^t\varphi\Vert_{L^2(\dR^{d-1})}\leq C_\alpha\Vert\varphi\Vert_{H^{\frac{t(d-1)}{p}}(\dR^{d-1})}\leq C_\alpha\Vert\varphi\Vert_{H^1(\dR^{d-1})}
\end{equation}
holds for every $\varphi\in H^1(\dR^{d-1})$ and the operator $B$ is well-defined. Clearly the first inequality in 
\eqref{Eq_Representation_via_boundary_values_4} also holds for $\varphi\in H^{\frac{t(d-1)}{p}}(\mathbb{R}^{d-1})$. Next we decompose $B$ into
\begin{alignat*}{2}
B_1\varphi&=\sig(\alpha)|\alpha|^{\frac{1}{3}}\varphi,\quad&&\dom B_1=\bigl\{\varphi\in L^2(\dR^{d-1}) : \vert\alpha\vert^{\frac{1}{3}}\varphi\in L^2(\dR^{d-1})\bigr\}, \\
B_2\varphi&=|\alpha|^{\frac{2}{3}}\varphi,&&\dom B_2=\bigl\{\varphi\in L^2(\dR^{d-1}) : \vert\alpha\vert^{\frac{2}{3}}\varphi\in L^2(\dR^{d-1})\bigr\}.
\end{alignat*}
Using the first estimate in (\ref{Eq_Representation_via_boundary_values_4}) it follows that every $\varphi\in\dom B=H^1(\dR^{d-1})$ satisfies 
$\varphi\in\dom B_2$ and $B_2\varphi\in\dom B_1$, and hence the operator inclusion $B\subset B_1B_2$ holds.

For the operators $B$, $B_1$ and $B_2$ we now verify
the assumptions in Corollary~\ref{cor2_Selfadjoint_boundary_operators}. First of all, 
since $\alpha$ is real-valued, it is clear that the operator $B$ is symmetric in $L^2(\dR^{d-1})$. 
Moreover, $\ran\Gamma_0=L^2(\dR^{d-1})$ as well as $\ran\Gamma_1=H^1(\dR^{d-1})$ holds by 
Proposition \ref{prop_Quasi_boundary_triple_of_the_laplacian} and hence also assumption (iii) in Corollary~\ref{cor2_Selfadjoint_boundary_operators} is fulfilled. 
Therefore, it remains to choose a suitable $\lambda_0\in\rho(A_N)\cap\dR=(-\infty,0)$ such that the assumptions (i) and (ii) are satisfied.

Using again (\ref{Eq_Representation_via_boundary_values_4}), the boundedness of the Dirichlet trace operator 
$$\tau_D:H^{\frac{2(d-1)}{3p}+\frac{1}{2}}(\dR^d_+)\rightarrow H^{\frac{2(d-1)}{3p}}(\dR^{d-1}),$$ 
and Lemma~\ref{lem_Epsilon_estimate}, we find a constant $c_1>0$
such that 
\begin{equation}\label{Eq_Representation_via_boundary_values_2}
\begin{split}
\Vert|\alpha|^{\frac{2}{3}}\tau_Dg\Vert^2_{L^2(\dR^{d-1})}&\leq C_\alpha^2\Vert\tau_Dg\Vert^2_{H^{\frac{2(d-1)}{3p}}(\dR^{d-1})} \\
&\leq C_\alpha^2\Vert\tau_D\Vert^2\Vert g\Vert^2_{H^{\frac{2(d-1)}{3p}+\frac{1}{2}}(\dR^d_+)} \\
&\leq\frac{1}{2}\Vert\nabla g\Vert^2_{L^2(\dR^d_+,\dC^d)}+c_1\Vert g\Vert^2_{L^2(\dR^d_+)}
\end{split}
\end{equation}
holds for all $g\in H^1(\dR^d_+)$. In the last step it was crucial that $\frac{2(d-1)}{3p}+\frac{1}{2}<1$, which is equivalent to $p>\frac{4}{3}(d-1)$ and is fulfilled by the assumptions on $p$ in every dimension $d\geq 2$.
In the same way we find a constant $c_2>0$ such that
\begin{equation}\label{Eq_Representation_via_boundary_values_5}
\Vert|\alpha|^{\frac{1}{3}}\tau_Dg\Vert^2_{L^2(\dR^{d-1})}\leq\frac{1}{2}\Vert\nabla g\Vert^2_{L^2(\dR^d_+,\dC^d)}+c_2\Vert g\Vert^2_{L^2(\dR^d_+)}
\end{equation}
holds for all $g\in H^1(\dR^d_+)$.
For the choice $\lambda_0\coloneqq-2\max\{c_1,c_2\}\in\rho(A_N)$, the estimates (\ref{Eq_Representation_via_boundary_values_2}) and (\ref{Eq_Representation_via_boundary_values_5}) turn into
\begin{align}
&\Vert|\alpha|^{\frac{2}{3}}\tau_Dg\Vert^2_{L^2(\dR^{d-1})}\leq\frac{1}{2}
\bigl(\Vert\nabla g\Vert^2_{L^2(\dR^d_+,\dC^d)}-\lambda_0\Vert g\Vert^2_{L^2(\dR^d_+)}\bigr), \label{Eq_Representation_via_boundary_values_8} \\
&\Vert|\alpha|^{\frac{1}{3}}\tau_Dg\Vert^2_{L^2(\dR^{d-1})}\leq\frac{1}{2}
\bigl(\Vert\nabla g\Vert^2_{L^2(\dR^d_+,\dC^d)}-\lambda_0\Vert g\Vert^2_{L^2(\dR^d_+)}\bigr), \label{Eq_Representation_via_boundary_values_9}
\end{align}
for all $g\in H^1(\dR^d_+)$.

\vspace{0.2cm}
\noindent
\textit{Assumption (ii).} In order to check $\ran(B_2\overline{M(\lambda_0)B_1})\subset\dom B_1$ we have to show 
$|\alpha|^{\frac{1}{3}}B_2\overline{M(\lambda_0)B_1}\varphi=\vert\alpha\vert\overline{M(\lambda_0)B_1}\varphi\in L^2(\dR^{d-1})$ 
for all functions $\varphi\in\dom(B_2\overline{M(\lambda_0)B_1})$. Using (\ref{Eq_Representation_via_boundary_values_4}) it suffices to verify $\overline{M(\lambda_0)B_1}\varphi\in H^{\frac{d-1}{p}}(\dR^{d-1})$.

First consider $\varphi\in\dom(M(\lambda_0)B_1)$ and choose $\beta_{3p}\in L^{3p}(\dR^{d-1})$ and $\beta_\infty\in L^\infty(\dR^{d-1})$ 
such that $\sig(\alpha)|\alpha|^{\frac{1}{3}}=\beta_{3p}+\beta_\infty$; cf. \eqref{Eq_Lebesgue_regularity_of_the_power}. Then by the boundedness of the
Weyl function in Lemma \ref{lem_Sobolev_Boundedness_of_Weyl_function} and \eqref{referhere} we obtain
\begin{equation}\label{Eq_Representation_via_boundary_values_3}
\begin{split}
\Vert M(\lambda_0)&B_1\varphi\Vert_{H^{\frac{d-1}{p}}(\dR^{d-1})} \\
&\leq\Vert M(\lambda_0)\beta_{3p}\varphi\Vert_{H^{\frac{d-1}{p}}(\dR^{d-1})}+\Vert M(\lambda_0)\beta_\infty\varphi\Vert_{H^{\frac{d-1}{p}}(\dR^{d-1})} \\
&\leq c\left(\Vert\beta_{3p}\varphi\Vert_{L^{\frac{6p}{3p+2}}(\dR^{d-1})}+\Vert\beta_\infty\varphi\Vert_{L^2(\dR^{d-1})}\right) \\
&\leq c\left(\Vert\beta_{3p}\Vert_{L^{3p}(\dR^{d-1})}+\Vert\beta_\infty\Vert_{L^\infty(\dR^{d-1})}\right)\Vert\varphi\Vert_{L^2(\dR^{d-1})},
\end{split}
\end{equation}
where Lemma~\ref{lem_Sobolev_Boundedness_of_Weyl_function} was used in the penultimate inequality with $s$ and $p$ replaced by $\tfrac{d-1}{p}$ and $\tfrac{6p}{3p+2}$, respectively, which is possible since $p>\tfrac{4}{3}(d-1)$ holds by assumption for every dimension $d\geq 2$. Furthermore, in the last estimate the H\"older inequality with the exponents $\frac{3p+2}{2}$ and $\frac{3p+2}{3p}$ was used.

Now let $\varphi\in\dom(\overline{M(\lambda_0)B_1})$ and pick a sequence $(\varphi_n)\subset\dom(M(\lambda_0)B_1)$ 
such that $\varphi_n\rightarrow\varphi$ and $M(\lambda_0)B_1\varphi_n\rightarrow\overline{M(\lambda_0)B_1}\varphi$ for $n\rightarrow\infty$ in $L^2(\dR^{d-1})$. 
It is clear from (\ref{Eq_Representation_via_boundary_values_3}) that the sequence $(M(\lambda_0)B_1\varphi_n)$ converges in $H^{\frac{d-1}{p}}(\dR^{d-1})$
to an element $g\in H^{\frac{d-1}{p}}(\dR^{d-1})$. Hence it follows that 
$$g=\overline{M(\lambda_0)B_1}\varphi\in H^{\frac{d-1}{p}}(\dR^{d-1}).$$ 
Therefore, assumption (ii) in Corollary~\ref{cor2_Selfadjoint_boundary_operators} holds.

\vspace{0.2cm}
\noindent
\textit{Assumption (i).} We prove $1\in\rho(B_2\overline{M(\lambda_0)B_1})$ by showing that $B_2\overline{M(\lambda_0)B_1}$ is an everywhere defined bounded 
operator with norm strictly less than $1$.

For this we define the inner product
\begin{equation*}
(f,g)_{\lambda_0}\coloneqq(\nabla f,\nabla g)_{L^2(\dR^d_+,\dC^d)}-\lambda_0(f,g)_{L^2(\dR^d_+)},\quad f,g\in H^1(\dR^d_+),
\end{equation*}
and note that the corresponding norm is equivalent to the usual $H^1(\dR^d_+)$-norm. 
Fix now any $\varphi\in\dom(B_2M(\lambda_0)B_1)$ and use (\ref{Eq_Representation_via_boundary_values_8}) for $g=\gamma(\lambda_0)B_1\varphi$ to obtain the estimate
\begin{equation*}
\begin{split}
\Vert B_2M(\lambda_0)B_1\varphi\Vert^2_{L^2(\dR^{d-1})}&=\Vert|\alpha|^{\frac{2}{3}}\tau_D\gamma(\lambda_0)B_1\varphi\Vert^2_{L^2(\dR^{d-1})} \\
&\leq\frac{1}{2}\Vert\gamma(\lambda_0)B_1\varphi\Vert^2_{\lambda_0} \\
&=\frac{1}{2}\sup\limits_{h\in H^1(\dR^d_+)\setminus\{0\}}\frac{(\gamma(\lambda_0)B_1\varphi,h)^2_{\lambda_0}}{\Vert h\Vert^2_{\lambda_0}}.
\end{split}
\end{equation*}
Using the first Green identity and the properties
\begin{equation*}
(-\Delta-\lambda_0)\gamma(\lambda_0)B_1\varphi=0\quad\textnormal{and}\quad\tau_N\gamma(\lambda_0)B_1\varphi=B_1\varphi,
\end{equation*}
of the $\gamma$-field, which follow immediately from its definition (\ref{Eq_Gamma_field}) and Proposition~\ref{prop_Quasi_boundary_triple_of_the_laplacian}, we find
\begin{equation*}
\begin{split}
(\gamma(\lambda_0)B_1\varphi,h)_{\lambda_0}&=(\nabla\gamma(\lambda_0)B_1\varphi,\nabla h)_{L^2(\dR^d_+,\dC^d)} - \lambda_0 (\gamma(\lambda_0)B_1 \varphi,h)_{L^2(\dR^d_+)}\\
&=(\nabla\gamma(\lambda_0)B_1\varphi,\nabla h)_{L^2(\dR^d_+,\dC^d)} +(\Delta \gamma(\lambda_0) B_1 \varphi,h)_{L^2(\dR^d_+)}\\
&=(\tau_N\gamma(\lambda_0)B_1\varphi,\tau_D h)_{L^2(\dR^{d-1})}\\
&=(B_1\varphi,\tau_D h)_{L^2(\dR^{d-1})}
\end{split}
\end{equation*}
and hence
\begin{align*}
\Vert B_2M(\lambda_0)B_1\varphi\Vert^2_{L^2(\dR^{d-1})}&\leq\frac{1}{2}\sup\limits_{h\in H^1(\dR^d_+)\setminus\{0\}}
\frac{(B_1\varphi,\tau_D h)^2_{L^2(\dR^{d-1})}}{\Vert h\Vert^2_{\lambda_0}} \\
&\leq\frac{1}{2}\Vert\varphi\Vert^2_{L^2(\dR^{d-1})}\sup\limits_{h\in H^1(\dR^d_+)\setminus\{0\}}\frac{\Vert|\alpha|^{\frac{1}{3}}\tau_Dh\Vert^2_{L^2(\dR^{d-1})}}
{\Vert h\Vert_{\lambda_0}^2}.
\end{align*}
Equation (\ref{Eq_Representation_via_boundary_values_9}) then leads to the estimate
\begin{equation}\label{Eq_Representation_via_boundary_values_1}
\Vert B_2M(\lambda_0)B_1\varphi\Vert^2_{L^2(\dR^{d-1})}\leq\frac{1}{4}\Vert\varphi\Vert^2_{L^2(\dR^{d-1})}
\end{equation}
for any $\varphi\in\dom(B_2M(\lambda_0)B_1)$.

\vspace{0.2cm}

As $B_2$ is closed and \eqref{Eq_Representation_via_boundary_values_3} implies that $M(\lambda_0)B_1$ is bounded in $L^2(\dR^{d-1})$ it follows that $B_2\overline{M(\lambda_0)B_1}$ is closed in $L^2(\dR^{d-1})$ as well. Since $\overline{B_2M(\lambda_0)B_1}$ is everywhere defined, this however implies $\overline{B_2M(\lambda_0)B_1}=B_2\overline{M(\lambda_0)B_1}$ and hence $1\in\rho(B_2\overline{M(\lambda_0)B_1})$ follows from (\ref{Eq_Representation_via_boundary_values_1}). This completes the proof of Theorem~\ref{satz_Representation_via_boundary_values}.
\end{proof}

\begin{rem}
If one uses Corollary \ref{cor_Selfadjoint_boundary_operators} instead of Corollary \ref{cor2_Selfadjoint_boundary_operators} 
in the proof of Theorem \ref{satz_Representation_via_boundary_values} only 
$\alpha\in L^p(\dR^{d-1})+L^\infty(\dR^{d-1})$ with $p>2(d-1)$ can be treated. In fact, in this situation one chooses $B_2=B$ to be the multiplication operator with $\alpha$
and for the estimate \eqref{Eq_Representation_via_boundary_values_2} (with $\alpha$ instead of $\vert\alpha\vert^\frac{2}{3}$) it is necessary to restrict 
to $p>2(d-1)$. Thus, for Laplacians with singular Robin boundary conditions  Theorem \ref{satz_Selfadjoint_boundary_operators} 
and Corollary~\ref{cor2_Selfadjoint_boundary_operators} allow a larger class of boundary parameters $\alpha$ than Corollary \ref{cor_Selfadjoint_boundary_operators}.
\end{rem}

\begin{rem}\label{agwrem}
A variant of Theorem~\ref{satz_Representation_via_boundary_values} for more general elliptic second order operators 
on a certain class of unbounded non-smooth domains with Robin boundary conditions containing also differential or pseudodifferential operators 
can be found in \cite{AGW14}. In our situation for a Robin Laplacian on $\dR^d_+$ with an $H^{1/2}$-smooth real-valued 
$$\alpha\in H^{1/2}_p(\mathbb{R}^{d-1}),\qquad p>2(d-1),$$
it follows from \cite[Theorem 7.2]{AGW14} that the operator
\begin{equation*}
A_\alpha f=-\Delta f,\quad\dom A_\alpha=\left\{f\in H^2(\dR^d_+) : \alpha\tau_Df=\tau_Nf \right\},
\end{equation*}
is self-adjoint in $L^2(\dR^d_+)$.
\end{rem}

Self-adjoint Laplacians with Robin boundary conditions can also be defined via the densely defined, symmetric form 
\begin{equation}\label{Eq_Quadratic_form}
\mathfrak a_\alpha[f]=\Vert\nabla f\Vert^2_{L^2(\dR^d,\dC^d)}-\int_{\dR^{d-1}}\alpha|\tau_Df|^2dx,\quad\dom\mathfrak a_\alpha=H^1(\dR^d_+),
\end{equation}
and the first representation theorem \cite[VI~Theorem~2.1]{K76}. The following proposition shows that 
this method allows a larger class of boundary parameters $\alpha$ as 
Theorem \ref{satz_Representation_via_boundary_values} does, but leads to an operator $A_\alpha$ with a less regular operator domain.
However, for functions $\alpha$ satisfying the stronger assumptions in Theorem \ref{satz_Representation_via_boundary_values}, 
the operators in \eqref{Eq_Robin_Laplacian_form} below and in \eqref{Eq_Robin_Laplacian_QBT} coincide. 
A variant of Proposition~\ref{prop_Operator_via_Quadratic_form} for bounded Lipschitz
domains can be found in \cite[Theorem 4.5 and Lemma 5.3]{GM09}.

\begin{prop}\label{prop_Operator_via_Quadratic_form}
Let $\alpha\in L^p(\dR^{d-1})+L^\infty(\dR^{d-1})$ be a real-valued function for $p=d-1$ if $d\geq 3$ and $p>1$ if $d=2$. 
Then the quadratic form $\mathfrak a_\alpha$ in \eqref{Eq_Quadratic_form} is semibounded and closed. The corresponding self-adjoint operator in $L^2(\dR^d_+)$ 
is given by
\begin{equation}\label{Eq_Robin_Laplacian_form}
A_\alpha f=-\Delta f,\quad\dom A_\alpha=\left\{f\in H^1(\dR^d_+) : \begin{array}{l} \Delta f\in L^2(\dR^d_+), \\ \alpha\tau_Df=\tau_Nf \end{array}\right\}.
\end{equation}
\end{prop}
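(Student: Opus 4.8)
The plan is to verify that the form $\mathfrak a_\alpha$ is semibounded and closed, and then to identify the associated self-adjoint operator. First I would establish semiboundedness and closedness of $\mathfrak a_\alpha$. The key analytic input is a trace-type estimate controlling the boundary integral $\int_{\dR^{d-1}}\alpha|\tau_D f|^2\,dx$ by the gradient term. Writing $\alpha=\alpha_p+\alpha_\infty$ with $\alpha_p\in L^p(\dR^{d-1})$ and $\alpha_\infty\in L^\infty(\dR^{d-1})$, the $L^\infty$-part is harmless and can be absorbed using Lemma~\ref{lem_Epsilon_estimate} together with boundedness of the trace operator. For the $L^p$-part one applies H\"older's inequality on $\dR^{d-1}$ together with the Sobolev embedding for the trace $\tau_D f$; in the borderline Sobolev exponent one obtains, for every $\varepsilon>0$, an estimate
\begin{equation*}
\int_{\dR^{d-1}}|\alpha|\,|\tau_D f|^2\,dx\leq\varepsilon\Vert\nabla f\Vert^2_{L^2(\dR^d_+,\dC^d)}+C_\varepsilon\Vert f\Vert^2_{L^2(\dR^d_+)},
\end{equation*}
valid for all $f\in H^1(\dR^d_+)$. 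This infinitesimal form-boundedness with respect to the (closed) Neumann form $\Vert\nabla f\Vert^2$ immediately yields that $\mathfrak a_\alpha$ is semibounded from below and closed, by the standard perturbation result for quadratic forms (the KLMN-type argument).

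Next I would invoke the first representation theorem \cite[VI~Theorem~2.1]{K76} to obtain the unique self-adjoint operator $A_\alpha$ associated with $\mathfrak a_\alpha$. It remains to identify this operator with the expression in \eqref{Eq_Robin_Laplacian_form}. The easier inclusion is to show that any $f$ in the claimed domain lies in $\dom A_\alpha$ and satisfies $A_\alpha f=-\Delta f$: for such $f$ one uses the second Green identity (integration by parts) to check that $\mathfrak a_\alpha[f,g]=(-\Delta f,g)_{L^2}$ for all $g\in H^1(\dR^d_+)$, where the boundary terms combine precisely because of the Robin condition $\alpha\tau_D f=\tau_N f$. The converse inclusion requires showing that every element of $\dom A_\alpha$ (defined abstractly through the form) satisfies $\Delta f\in L^2(\dR^d_+)$ and the Robin boundary condition. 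Here one takes $f\in\dom A_\alpha$ with $A_\alpha f=:h$, so that $\mathfrak a_\alpha[f,g]=(h,g)_{L^2}$ for all $g\in H^1(\dR^d_+)$; testing first against $g\in C_c^\infty(\dR^d_+)$ identifies $-\Delta f=h$ in the distributional sense, hence $\Delta f\in L^2(\dR^d_+)$, and then testing against general $g$ and integrating by parts recovers the boundary condition $\alpha\tau_D f=\tau_N f$ weakly on $\dR^{d-1}$.

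The main obstacle I anticipate is the trace estimate in the borderline case, particularly the sharp Sobolev exponent bookkeeping when $d\geq 3$ and $p=d-1$ (and the logarithmic-type borderline when $d=2$). One must verify that the assumption $p=d-1$ (resp.\ $p>1$ for $d=2$) is exactly what permits the H\"older--Sobolev combination to place the boundary term into $L^1(\dR^{d-1})$ and to extract the infinitesimal smallness of the gradient coefficient; this is more delicate than the estimate used in Theorem~\ref{satz_Representation_via_boundary_values}, which is precisely why the form method admits the larger class $p=d-1$. A secondary technical point is the rigorous justification of the integration by parts in the domain-identification step, since $f\in\dom A_\alpha$ need not a priori have $H^{3/2}$-regularity; one should argue via the weak definition of the normal trace $\tau_N$ for functions with $\Delta f\in L^2(\dR^d_+)$, so that the Green identity and the boundary condition are interpreted in the appropriate dual pairing on the boundary rather than pointwise.
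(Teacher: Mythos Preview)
Your overall strategy coincides with the paper's: split $\mathfrak a_\alpha=\mathfrak a-\mathfrak t$, show that $\mathfrak t$ is infinitesimally form-bounded with respect to $\mathfrak a$, apply KLMN, and then identify the operator via Green's identity. The paper in fact leaves the operator identification ``to the reader'', so your sketch of that part (test against $C_c^\infty(\dR^d_+)$ to get $-\Delta f\in L^2$, then against general $g\in H^1$ using the weak normal trace to recover the Robin condition) goes further than the paper does.

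There is, however, one genuine gap in your handling of the $L^p$-part. You write that H\"older plus the Sobolev embedding for $\tau_D f$ yields, \emph{for every} $\varepsilon>0$, the estimate $\int|\alpha_p|\,|\tau_D f|^2\leq\varepsilon\Vert\nabla f\Vert^2+C_\varepsilon\Vert f\Vert^2$. In the critical case $p=d-1$ (so that $\tfrac{2p}{p-1}$ is exactly the critical exponent for $H^{1/2}(\dR^{d-1})$) this does \emph{not} follow directly: H\"older plus the endpoint Sobolev embedding only gives
\[
\int_{\dR^{d-1}}|\alpha_p|\,|\tau_D f|^2\,dx\;\leq\;C\,\Vert\alpha_p\Vert_{L^p(\dR^{d-1})}\,\Vert f\Vert_{H^1(\dR^d_+)}^2,
\]
with a fixed constant $C\Vert\alpha_p\Vert_{L^p}$ that you cannot make small. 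There is no interpolation room at the critical exponent, so Lemma~\ref{lem_Epsilon_estimate} cannot help here. The paper resolves this by a further truncation of $\alpha_p$: given $\varepsilon>0$, choose $\gamma_\varepsilon$ large enough that $\Vert\alpha_p\Vert_{L^p(\{|\alpha_p|>\gamma_\varepsilon\})}\leq\varepsilon$, and write $\alpha_p=\alpha_p^{(0)}+\alpha_p^{(1)}$ with $\alpha_p^{(0)}$ bounded by $\gamma_\varepsilon$ and $\alpha_p^{(1)}$ supported on $\{|\alpha_p|>\gamma_\varepsilon\}$. The bounded piece $\alpha_p^{(0)}$ is then absorbed together with $\alpha_\infty$ via Lemma~\ref{lem_Epsilon_estimate}, while for $\alpha_p^{(1)}$ the H\"older--Sobolev estimate now carries the small factor $\Vert\alpha_p^{(1)}\Vert_{L^p}\leq\varepsilon$. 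You correctly flag ``extracting the infinitesimal smallness'' as the main obstacle, but this truncation is precisely the missing mechanism; without it the argument does not close in the borderline regime $p=d-1$.
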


\begin{proof}
In order to prove that the form $\mathfrak a_\alpha$ is semibounded and closed we split $\mathfrak a_\alpha$ into the two quadratic forms
\begin{equation*}
\mathfrak a[f]=\Vert\nabla f\Vert^2_{L^2(\dR^d_+,\dC^d)}\quad\text{and}\quad
\mathfrak t[f]=\int_{\dR^{d-1}}\alpha(x)|\tau_Df(x)|^2dx
\end{equation*}
with $\dom\mathfrak a=\dom\mathfrak t= H^1(\dR^d_+)$
and observe that $\mathfrak a$ is a densely defined, nonnegative, closed form in $L^2(\dR^{d-1})$. Now it suffices to check that $\mathfrak t$ is relatively bounded with respect 
to $\mathfrak a$ with relative bound $<1$, that is, for some $a\geq 0$ and $0\leq b<1$ 
\begin{equation}\label{Eq_Form_method_3}
\vert\mathfrak t[f]\vert\leq a \Vert f\Vert^2_{L^2(\dR^d_+)} + b\,  \mathfrak a[f],\qquad f\in H^1(\dR^d_+),
\end{equation}
since in this case the KLMN theorem \cite[Theorem 6.24]{Teschl} %\cite[Satz 4.12 b)]{W00} 
(see also \cite[VI~Theorem~1.33]{K76}) yields that the form $\mathfrak a_\alpha=\mathfrak a-\mathfrak t$ is densely defined, 
closed, and semibounded in $L^2(\dR^{d-1})$.
To verify \eqref{Eq_Form_method_3}, decompose the function $\alpha=\alpha_p+\alpha_\infty$ in the sum of 
$\alpha_p\in L^p(\dR^{d-1})$ and $\alpha_\infty\in L^\infty(\dR^{d-1})$, and let $\varepsilon>0$.
For the unbounded part $\alpha_p$ choose a sufficiently large $\gamma_\varepsilon>0$ such that
\begin{equation}\label{Eq_Form_method_4}
\Vert\alpha_p\Vert_{L^p(K_\varepsilon)}\leq\varepsilon\quad\text{with}\quad K_\varepsilon=\bigl\{x\in\dR^{d-1} : |\alpha_p(x)|>\gamma_\varepsilon\bigr\},
\end{equation}
and write $\alpha_p$ as the sum of 
\begin{equation*}
\alpha_p^{(0)}(x)=\left\{\begin{array}{ll} 0, & x\in K_\varepsilon, \\ \alpha_p(x), & x\notin K_\varepsilon, \end{array}\right.\quad\text{and}\quad\alpha_p^{(1)}(x)=\left\{\begin{array}{ll} \alpha_p(x), & x\in K_\varepsilon, \\ 0, & x\notin K_\varepsilon. \end{array}\right.
\end{equation*}
With this decomposition we now estimate the form $\mathfrak t$ by
\begin{equation}\label{Eq_Form_method_1}
\begin{split}
|\mathfrak t[f]|\leq\int_{\dR^{d-1}}|&\alpha_\infty(x)+\alpha_p^{(0)}(x)|\;|\tau_Df(x)|^2dx\\
&\qquad\qquad+\int_{\dR^{d-1}}|\alpha_p^{(1)}(x)|\;|\tau_Df(x)|^2dx
\end{split}
\end{equation}
and discuss both integrals on the right hand side of \eqref{Eq_Form_method_1} separately. For the first integral we fix some arbitrary $s\in(\frac{1}{2},1)$ and use the continuity of 
$\tau_D:H^s(\dR^d_+)\rightarrow H^{s-\frac{1}{2}}(\dR^{d-1})$ as well as Lemma~\ref{lem_Epsilon_estimate} to obtain
\begin{equation}\label{Eq_Form_method_5}
\begin{split}
\int_{\dR^{d-1}}|\alpha_\infty(x)+&\alpha_p^{(0)}(x)|\;|\tau_Df(x)|^2dx\\
&\quad\leq(\Vert\alpha_\infty\Vert_{L^\infty(\dR^{d-1})}+\gamma_\varepsilon)\Vert\tau_Df\Vert^2_{L^2(\dR^{d-1})} \\
&\quad\leq c'\Vert  f\Vert^2_{H^s(\dR^d_+)} \\
&\quad\leq c'\varepsilon\Vert\nabla f\Vert^2_{L^2(\dR^d_+,\dC^d)}+c'C_\varepsilon\Vert f\Vert^2_{L^2(\dR^d_+)},
\end{split}
\end{equation}
where $c'=(\Vert\alpha_\infty\Vert_{L^\infty(\dR^{d-1})}+\gamma_\varepsilon)\Vert\tau_D\Vert^2$ and $C_\varepsilon$ is the constant in Lemma~\ref{lem_Epsilon_estimate}.
For the estimate of the second integral in \eqref{Eq_Form_method_1} we first use the H\"older inequality and \eqref{Eq_Form_method_4} to obtain
\begin{equation*}
\begin{split}
\int_{\dR^{d-1}}|\alpha_p^{(1)}(x)|\;|\tau_Df(x)|^2dx&\leq\Vert\alpha_p^{(1)}\Vert_{L^p(\dR^{d-1})}\Vert\tau_Df\Vert^2_{L^{\frac{2p}{p-1}}(\dR^{d-1})} \\
&\leq\varepsilon\Vert \tau_D f\Vert_{L^{\frac{2p}{p-1}}(\mathbb{R}^{d-1})}^2.
\end{split}
\end{equation*}
By the given assumptions on $p$ we can now apply the Sobolev embedding theorem \cite[Theorem~8.12.4 Case I]{B12} if $d\geq 3$ 
and \cite[Theorem~8.12.4 Case II]{B12} if $d=2$. This leads to the estimate
\begin{equation}\label{Eq_Form_method_6}
\int_{\dR^{d-1}}|\alpha_p^{(1)}|\;|\tau_Df(x)|^2dx\leq\varepsilon c''\Vert \tau_D f\Vert_{H^{\frac{1}{2}}(\dR^{d-1})}^2\leq\varepsilon c'''\Vert f\Vert^2_{H^1(\dR^d_+)}
\end{equation}
with some constants $c'',c'''>0$. From \eqref{Eq_Form_method_5} and \eqref{Eq_Form_method_6} we conclude that \eqref{Eq_Form_method_3} holds for all $b>0$ and hence it follows, in particular,
that $\mathfrak a_\alpha$ closed and semibounded.

\vspace{0.2cm}

We leave it to the reader to verify that 
 the self-adjoint operator corresponding to $\mathfrak a_\alpha$ is given by \eqref{Eq_Robin_Laplacian_form}.
\end{proof}


\begin{thebibliography}{99}

\bibitem{AGW14} H. Abels, G. Grubb and G. Wood, Extension theory and Krein-type resolvent formulas for nonsmooth boundary value problems, J. Funct. Anal. 266 (2014) 4037--4100.

\bibitem{AW03} W. Arendt and M. Warma,
The Laplacian with Robin boundary conditions on arbitrary domains, Potential Anal. 19 (2003), 341--363.

\bibitem{BL07} J. Behrndt and M. Langer, Boundary value problems for elliptic partial differential operators on bounded domains, 
J. Funct. Anal. 243 (2007), 536--565.

\bibitem{BL12}
J.~Behrndt and M.~Langer,
Elliptic operators, Dirichlet-to-Neumann maps and quasi boundary triples,
in: \textit{Operator Methods for Boundary Value Problems},
London Math.\ Soc.\ Lecture Note Series 404, 2012, pp.~121--160.

\bibitem{BeLaLoRo2017_1} J. Behrndt, M. Langer, V. Lotoreichik, and J. Rohleder, 
Quasi boundary triples and semibounded self-adjoint extensions, Proc. Roy. Soc. Edinburgh A 147 (2017), 895--916.

\bibitem{B09}
M.\,V. Berry, Hermitian boundary conditions at a Dirichlet singularity: the Marletta-Rozenblum model, J. Phys. A 42 (2009), 165208, 13 pp.

\bibitem{BD08}
M.\,V. Berry and M. Dennis, Boundary-condition-varying circle billiards and gratings: the Dirichlet singularity,
J. Phys. A 41 (2008), 135203, 23 pp.

\bibitem{B12} P.\,K. Bhattacharyya, {\it Distributions. Generalized Functions with Applications in Sobolev Spaces}, de Gruyter Textbook, 2012. 

\bibitem{BPP18}
V. Bruneau, K. Pankrashkin, and N. Popoff, Eigenvalue counting function for Robin Laplacians on conical domains, J. Geom. Anal. 28 (2018), 123--151.

\bibitem{BP16}
V. Bruneau and N. Popoff, On the negative spectrum of the Robin Laplacian in corner domains, Anal. PDE. 9
(2016), 1259--1283.

\bibitem{BGP08}
J.~Br\"uning, V.~Geyler and K.~Pankrashkin,
Spectra of self-adjoint extensions and applications to solvable Schr\"odinger operators,
Rev.\ Math.\ Phys. 20 (2008), 1--70.

\bibitem{D00} D.~Daners, Robin boundary value problems on arbitrary domains, Trans. Amer. Math. Soc. 352 (2000), 4207--4236. 

\bibitem{DHMS06}
V.\,A.~Derkach, S.~Hassi, M.\,M.~Malamud and H.~de~Snoo,
Boundary relations and their Weyl families,
Trans.\ Amer.\ Math.\ Soc. 358 (2006), 5351--5400.

\bibitem{DM91}
V.\,A.~Derkach and M.\,M.~Malamud,
Generalized resolvents and the boundary value problems for Hermitian operators with gaps,
J.\ Funct.\ Anal. 95 (1991), 1--95.

\bibitem{DM95}
V.\,A.~Derkach and M.\,M.~Malamud,
The extension theory of Hermitian operators and the moment problem,
J.\ Math.\ Sci. 73 (1995), 141--242.

\bibitem{ES88}
P. Exner and P. \u{S}eba, A simple model of thin-film point contact in two and three dimensions,
Czechoslovak J. Phys. B 38 (1988), 1095--1110.

\bibitem{FL08} R.\,L. Frank and A.~Laptev, Spectral inequalities for Schr\"{o}dinger operators with surface potentials,
in: \textit{Spectral Theory of Differential Operators}, Amer. Math. Soc. Transl. Ser. 2, 225 (2008), pp. 91--102. 

\bibitem{GM09} F. Gesztesy and M. Mitrea, 
Nonlocal Robin Laplacians and some remarks on a paper by Filonov on eigenvalue inequalities, 
J. Differential Equations 247 (2009), 2871--2896.

\bibitem{GG91}
V.\,I.~Gorbachuk and M.\,L.~Gorbachuk,
\textit{Boundary Value Problems for Operator Differential Equations},
Kluwer Academic Publ., Dordrecht, 1991.

\bibitem{Grubb} 
G.~Grubb,
Distributions and Operators,
Graduate Texts in Mathematics 252,
Springer, New York, 2009.

\bibitem{G11} G.~Grubb,
Spectral asymptotics for Robin problems with a discontinuous coefficient, J. Spectr. Theory 1 (2011), 155--177.

\bibitem{HT08}
D.\,D. Haroske and H. Triebel, \textit{Distribution, Sobolev Spaces, Elliptic Equations}, EMS Textbooks in Mathematics, Z\"{u}rich, 2008.

\bibitem{HK18}
B. Helffer and A. Kachmar, Eigenvalues for the Robin Laplacian in domains with variable curvature, 
 Trans. Amer. Math. Soc. 369 (2017), 3253--3287.

\bibitem{HKR17}
B. Helffer A. Kachmar, and N. Raymond, Tunneling for the Robin Laplacian in smooth planar domains, 
Commun. Contemp. Math. 19 (2017), 1650030, 38 pp.

\bibitem{HP15}
B. Helffer and K. Pankrashkin, Tunneling between corners for Robin Laplacians, J. London Math. Soc. 91 (2015),
225--248.

\bibitem{K76}
T. Kato, \textit{Perturbation Theory for Linear Operators}, Classics in Mathematics, Springer-Verlag, Berlin, 1995.

\bibitem{KL12} H.~Kova\v{r}\'ik and A. Laptev, Hardy inequalities for Robin Laplacians, J. Funct. Anal. 262 (2012), 4972--4985.

\bibitem{LP08} M. Levitin and L. Parnovski, On the principal eigenvalue of a Robin problem with a large parameter, 
Math. Nachr. 281 (2008), 272--281.

\bibitem{LR12} V. Lotoreichik and J. Rohleder, Schatten-von Neumann estimates for resolvent differences of
Robin Laplacians on a half-space, in: \textit{Spectral Theory, Mathematical System Theory, Evolution
Equations, Differential and Difference Equations}, Oper. Theory Adv. Appl. 221,
Birkh\"{a}user, Basel, 2012, pp. 453--468.

\bibitem{MR09}
M. Marletta and G. Rozenblum, A Laplace operator with boundary conditions singular at one point,
J. Phys. A 42 (2009), 125204 (11pp).

\bibitem{NP17} S.\,A.~Nazarov and N.~Popoff,
Self-adjoint and skew-symmetric extensions of the Laplacian with singular Robin boundary condition,
arXiv:1711.09654.

\bibitem{PP15}
K. Pankrashkin and N. Popoff, Mean curvature bounds and eigenvalues of Robin Laplacians, Calc. Var. PDE 54
(2015), 1947--1961.

\bibitem{PP16}
K. Pankrashkin and N. Popoff, An effective Hamiltonian for the eigenvalue asymptotics of the Robin Laplacian
with a large parameter, J. Math. Pures Appl. 106 (2016), 615--650.


\bibitem{R14}
J. Rohleder, Strict inequality of Robin eigenvalues for elliptic differential operators on Lipschitz domains, J. Math. Anal. Appl. 418 (2014), 978--984.


\bibitem{S70}
E.\,M. Stein, \textit{Singular Integrals and Differentiability Properties of Functions}, Princeton University Press, 1970.

\bibitem{Teschl} 
G.~Teschl, Mathematical Methods in Quantum Mechanics.
With Applications to Schr\"odinger Operators,
American Mathematical Society, Providence, 2009.

%\bibitem{W00}
%J. Weidmann, \textit{Lineare Operatoren in Hilbertr\"aumen, Teil 1 Grundlagen}, B.G. Teubner, Stuttgart, 2000.


\end{thebibliography}
\end{document}